 \font \eightrm=cmr8
 \newcommand{\nc}{\newcommand}
 \nc{\butcher}{{\scriptstyle\ \circleright\ }}
\nc{\surj}{\to\hskip -3mm \to}
\newtheorem{thm}{Theorem}
\newtheorem{cor}[thm]{Corollary}
\newtheorem{lem}[thm]{Lemma}
\newtheorem{rmk}[thm]{Remark}
\def\racine{{\scalebox{0.3}{
\begin{picture}(12,12)(38,-38)
\SetWidth{0.5} \SetColor{Black} \Vertex(45,-30){6}
\end{picture}
}}}
 \def\arbrea{\,{\scalebox{0.15}{ 
  \begin{picture}(8,55) (370,-248)
    \SetWidth{2}
    \SetColor{Black}
    \Line(374,-244)(374,-200)
    \Vertex(374,-197){9}
    \Vertex(375,-245){12}
  \end{picture}
}}\,}
 \def\arbreba{\,{\scalebox{0.15}{ 
\begin{picture}(8,106) (370,-197)
    \SetWidth{2}
    \SetColor{Black}
    \Line(374,-193)(374,-149)
    \Vertex(374,-146){9}
    \Vertex(375,-194){12}
    \Line(374,-142)(374,-98)
    \Vertex(374,-95){9}
  \end{picture}
}}\,}
 \def\arbrebb{\,{\scalebox{0.15}{ 
  \begin{picture}(48,48) (349,-255)
    \SetWidth{2}
    \SetColor{Black}
    \Vertex(375,-252){12}
    \Line(376,-250)(395,-215)
    \Line(373,-251)(354,-214)
    \Vertex(353,-211){9}
    \Vertex(395,-213){9}
  \end{picture}
}}}
\def\arbreca{\,{\scalebox{0.15}{
\begin{picture}(8,156) (370,-147)
    \SetWidth{2}
    \SetColor{Black}
    \Line(374,-143)(374,-99)
    \Vertex(374,-96){9}
    \Vertex(375,-144){12}
    \Line(374,-92)(374,-48)
    \Vertex(374,-45){9}
    \Line(374,-42)(374,2)
    \Vertex(374,5){9}
  \end{picture}
}}\,}
\def\arbrecb{\,{\scalebox{0.15}{
\begin{picture}(48,94) (349,-255)
\SetWidth{2}
\SetColor{Black}
\Line(376,-204)(395,-169)
\Line(373,-205)(354,-168)
\Vertex(353,-165){9}
\Vertex(395,-167){9}
\Vertex(374,-205){9}
\Line(374,-246)(374,-209)
\Vertex(374,-252){12}
\end{picture}}}\,}
\def\arbrecc{\,{\scalebox{0.15}{
 \begin{picture}(48,98) (349,-205)
    \SetWidth{2}
    \SetColor{Black}
    \Vertex(375,-202){12}
    \Line(376,-200)(395,-165)
    \Line(373,-201)(354,-164)
    \Vertex(353,-161){9}
    \Vertex(395,-163){9}
    \Line(353,-160)(353,-113)
    \Vertex(353,-111){9}
  \end{picture}
}}\,}
\def\arbrecd{\,{\scalebox{0.15}{
\begin{picture}(48,52) (349,-251)
    \SetWidth{2}
    \SetColor{Black}
    \Vertex(375,-248){12}
    \Line(376,-246)(395,-211)
    \Line(373,-247)(354,-210)
    \Vertex(353,-207){9}
    \Vertex(395,-209){9}
    \Line(375,-247)(375,-206)
    \Vertex(376,-203){9}
  \end{picture}
 }}\,}
\def\arbrece{\,{\scalebox{0.2}{
  \begin{picture}(45,78) (7,-37)
    \SetWidth{1.0}
    \SetColor{Black}
    \Line(30,-27)(15,3)
    \SetWidth{0.5}
    \Vertex(30,-27){9.9}
    \SetWidth{1.0}
    \Line(30,-27)(45,3)
    \SetWidth{0.5}
    \Vertex(15,3){7.07}
    \Vertex(45,3){7.07}
    \SetWidth{1.0}
    \Line(45,3)(45,33)
    \SetWidth{0.5}
    \Vertex(45,33){7.07}
  \end{picture}
}}\,}
\newcommand{\tree}{\hskip 0.8pc\scalebox{-0.3}{{\parbox{0.5pc}{
  \begin{picture}(30,45) (75,-60)
    \SetWidth{1.5}
    \SetColor{Black}
    \Line(90,-30)(75,-60)
    \Line(90,-30)(105,-60)
    \Line(90,-15)(90,-30)
  \end{picture}}}}}
\newcommand{\treeA}{\hskip 1.5pc\scalebox{-0.3}{{\parbox{0.5pc}{
   \begin{picture}(60,75) (75,-30)
    \SetWidth{1.5}
    \SetColor{Black}
    \Line(90,0)(75,-30)
    \Line(90,0)(105,-30)
    \Line(105,30)(90,0)
    \Line(105,30)(135,-30)
    \Line(105,45)(105,30)
  \end{picture}}}}}
\newcommand{\treeB}{\hskip 1.5pc\scalebox{-0.3}{{\parbox{0.5pc}{
  \begin{picture}(60,75) (75,-30)
    \SetWidth{1.5}
    \SetColor{Black}
    \Line(90,0)(75,-30)
    \Line(105,30)(135,-30)
    \Line(105,45)(105,30)
    \Line(120,0)(105,-30)
    \Line(105,30)(90,0)
  \end{picture}}}}}
\newcommand{\treeC}{\hskip 1.5pc\scalebox{-0.2}{{\parbox{0.5pc}{
 \begin{picture}(90,105) (75,-30)
    \SetWidth{2.9}
    \SetColor{Black}
    \Line(90,0)(75,-30)
    \Line(120,60)(90,0)
    \Line(90,0)(105,-30)
    \Line(105,30)(135,-30)
    \Line(120,60)(165,-30)
    \Line(120,75)(120,60)
  \end{picture}
}}}}
\newcommand{\treeD}{\hskip 1.5pc\scalebox{-0.2}{{\parbox{0.5pc}{
 \begin{picture}(90,105) (75,-30)
    \SetWidth{2.9}
    \SetColor{Black}
    \Line(90,0)(75,-30)
    \Line(120,60)(90,0)
    \Line(90,0)(105,-30)
    \Line(120,60)(165,-30)
    \Line(120,75)(120,60)
    \Line(150,0)(135,-30)
  \end{picture}
}}}}
\newcommand{\treeE}{\hskip 1.5pc\scalebox{-0.2}{{\parbox{0.5pc}{
 \begin{picture}(90,105) (75,-30)
    \SetWidth{2.9}
    \SetColor{Black}
    \Line(90,0)(75,-30)
    \Line(120,60)(90,0)
    \Line(120,60)(165,-30)
    \Line(120,75)(120,60)
    \Line(150,0)(135,-30)
    \Line(135,30)(105,-30)
  \end{picture}
}}}}
\newcommand{\treeF}{\hskip 1.5pc\scalebox{-0.2}{{\parbox{0.5pc}{
 \begin{picture}(90,105) (75,-30)
    \SetWidth{2.9}
    \SetColor{Black}
    \Line(90,0)(75,-30)
    \Line(120,60)(90,0)
    \Line(120,60)(165,-30)
    \Line(120,75)(120,60)
    \Line(135,30)(105,-30)
    \Line(120,0)(135,-30)
  \end{picture}
}}}}
\newcommand{\treeG}{\hskip 1.5pc\scalebox{-0.2}{{\parbox{0.5pc}{
 \begin{picture}(90,105) (75,-30)
    \SetWidth{2.9}
    \SetColor{Black}
    \Line(90,0)(75,-30)
    \Line(120,60)(90,0)
    \Line(120,60)(165,-30)
    \Line(120,75)(120,60)
    \Line(105,30)(135,-30)
    \Line(120,0)(105,-30)
  \end{picture}
}}}}
\newcommand{\treebplus}{\scalebox{0.3}{{\parbox{0.5pc}{
\begin{picture}(457,327) (72,-58)
    \SetWidth{3}
    \SetColor{Black}
    \Line(110,118)(187,41)
    \Line(188,39)(188,-10)
    \Line(188,39)(373,145)
    \DashLine(374,146)(424,173){7}
    \Line(262,82)(200,144)
    \Line(343,128)(290,181)
    \Line(461,197)(423,235)
    \Line(424,174)(529,237)
    \Text(95,118)[lb]{\Huge{\Black{$\Phi^{-1}(t_1)$}}}
    \Text(183,144)[lb]{\Huge{\Black{$\Phi^{-1}(t_2)$}}}
    \Text(272,180)[lb]{\Huge{\Black{$\Phi^{-1}(t_3)$}}}
    \Text(411,239)[lb]{\Huge{\Black{$\Phi^{-1}(t_n)$}}}
  \end{picture}
}}}}
  \def\colotree{\,{\scalebox{0.6}{  
 \begin{picture}(192,155) (99,-27)
    \SetWidth{1.9}
    \SetColor{Blue}
    \Line(240,107)(224,75)
    \Line(224,75)(208,43)
    \Line(208,43)(192,11)
    \Line(192,11)(192,-21)
    \SetColor{WildStrawberry}
    \Line(176,43)(192,11)
    \Line(176,75)(176,43)
    \Line(176,107)(176,75)
    \SetColor{Green}
    \Line(208,75)(208,43)
    \Line(208,107)(208,75)
    \SetColor{Gray}
    \Line(144,107)(176,75)
    \SetColor{BlueGreen}
    \Line(144,75)(176,43)
    \SetWidth{1.0}
    \SetColor{Black}
    \Vertex(144,75){4}
    \Vertex(144,107){4}
    \Vertex(176,43){4}
    \Vertex(176,75){4}
    \Vertex(176,107){4}
    \Vertex(208,107){4}
    \Vertex(208,75){4}
    \Vertex(208,43){4}
    \Vertex(192,11){4}
    \Vertex(224,75){4}
    \Vertex(240,107){4}
    \Vertex(192,-21){5}
    \SetWidth{1.9}
    \SetColor{Fuchsia}
    \Line(128,43)(192,-21)
    \SetWidth{1.0}
    \SetColor{Black}
    \Vertex(160,11){4}
    \Vertex(128,43){4}
    \Text(120,51)[lb]{\Large{\Black{$\ell^{(2)}$}}}
    \Text(240,115)[lb]{\Large{\Black{$\ell^{(4)}$}}}
    \Text(204,115)[lb]{\Large{\Black{$\ell^{(2)}$}}}
    \Text(173,115)[lb]{\Large{\Black{$\ell^{(3)}$}}}
    \Text(142,115)[lb]{\Large{\Black{$\ell^{(1)}$}}}
    \Text(137,82)[lb]{\Large{\Black{$\ell^{(1)}$}}}
  \end{picture}
}}}
\nc{\ignore}[1]{{}}
\nc{\mrm}[1]{{\rm #1}}
\nc{\dirlim}{\displaystyle{\lim_{\longrightarrow}}\,}
\nc{\invlim}{\displaystyle{\lim_{\longleftarrow}}\,}
\nc{\vep}{\varepsilon} \nc{\ep}{\epsilon}
\nc{\sigmat}{\widetilde\sigma}
\nc{\ostar}{\overline{*}}
\nc{\mchar}{\mrm{Char}}
\nc{\Hom}{\mrm{Hom}}
\nc{\id}{\mrm{id}}
\nc{\remark}{\noindent{\bf{Remark:}}}
\nc{\remarks}{\noindent{\bf{Remarks:}}}
 \nc{\delete}[1]{}
 \nc{\grad}[1]{^{({#1})}}
 \nc{\fil}[1]{_{#1}}
\nc{\BA}{{\Bbb A}} \nc{\CC}{{\Bbb C}} \nc{\DD}{{\Bbb D}}
\nc{\EE}{{\Bbb E}} \nc{\FF}{{\Bbb F}} \nc{\GG}{{\Bbb G}}
\nc{\HH}{{\Bbb H}} \nc{\LL}{{\Bbb L}} \nc{\NN}{{\Bbb N}}
\nc{\PP}{{\Bbb P}} \nc{\QQ}{{\Bbb Q}} \nc{\RR}{{\Bbb R}}
\nc{\TT}{{\Bbb T}} \nc{\VV}{{\Bbb V}} \nc{\ZZ}{{\Bbb Z}}
\nc{\Cal}[1]{{\mathcal {#1}}}
\nc{\mop}[1]{\mathop{\hbox {\rm #1} }\nolimits}
\nc{\smop}[1]{\mathop{\hbox {\eightrm #1} }\nolimits}
\nc{\mopl}[1]{\mathop{\hbox {\rm #1} }\limits}
\nc{\frakg}{{\frak g}}
\nc{\g}[1]{{\frak {#1}}}
\def \restr#1{\mathstrut_{\textstyle |}\raise-8pt\hbox{$\scriptstyle #1$}}
\def \srestr#1{\mathstrut_{\scriptstyle |}\hbox to
  -1.5pt{}\raise-4pt\hbox{$\scriptscriptstyle #1$}}
\nc{\wt}{\widetilde}
\nc{\wh}{\widehat}
\nc{\un}{\hbox{\bf 1}}
\nc{\redtext}[1]{\textcolor{red}{\tt #1}}
\nc{\bluetext}[1]{\textcolor{blue}{#1}}
\nc{\comment}[1]{[[{\tt {#1}}]] }
\nc{\R}{\mathbb R}
\nc\fleche[1]{\mathop{\hbox to #1 mm{\rightarrowfill}}\limits}
\def\semi{\mathrel{\times}\kern -.85pt\joinrel\mathrel{\raise
    1.4pt\hbox{${\scriptscriptstyle |}$}}}
\nc{\np}{/\hskip -2.3mm\pi}
\nc{\snp}{/\hskip -1.8mm\pi}
\def\ta1{{\scalebox{0.2}{ 
\begin{picture}(12,12)(38,-38)
\SetWidth{0.5} \SetColor{Black} \Vertex(45,-33){5.66}
\end{picture}}}}
\begin{document}


\title[A closed formula for the Magnus Expansion]
      {The Magnus expansion, trees and \\ Knuth's rotation correspondence}

\author{Kurusch Ebrahimi-Fard}
\address{Instituto de Ciencias Matem\'aticas,
		C/ Nicol\'as Cabrera, no.~13-15, 28049 Madrid, Spain.
		On leave from Univ. de Haute Alsace, Mulhouse, France}
         \email{kurusch@icmat.es, kurusch.ebrahimi-fard@uha.fr}         
         \urladdr{www.icmat.es/kurusch}

\author{Dominique Manchon}
\address{Univ.~Blaise Pascal,
         	C.N.R.S.-UMR 6620,
         	63177 Aubi\`ere, France}       
         \email{manchon@math.univ-bpclermont.fr}
         \urladdr{http://math.univ-bpclermont.fr/$\sim$manchon/}

\date{March 12th, 2012}

\begin{abstract}
W.~Magnus introduced a particular differential equation characterizing the logarithm of the solution of linear initial value problems for linear operators. The recursive solution of this differential equation leads to a peculiar Lie series, which is known as Magnus expansion, and involves Bernoulli numbers, iterated Lie brackets and integrals. This paper aims at obtaining further insights into the fine structure of the Magnus expansion. By using basic combinatorics on planar rooted trees we prove a closed formula for the Magnus expansion in the context of free dendriform algebra. From this, by using a well-known dendriform algebra structure on the vector space generated by the disjoint union of the symmetric groups, we derive the Mielnik--Pleba\'nski--Strichartz formula for the continuous Baker--Campbell--Hausdorff series.  
\end{abstract}

\maketitle

\begin{quote}
{\small{{\bf{key words}}: Magnus expansion; $B$-series; trees; pre-Lie algebra; dendriform algebra, Rota--Baxter algebra; permutations.}}
\end{quote}


\tableofcontents


\section{Introduction}
\label{sect:intro}

During the last decade, some surprising convergence of different areas of mathematical sciences has occurred. The seemingly separate fields: numerical integration methods, Lyons' rough path theory \cite{Gubinelli,Lyons}, Ecalle's mould calculus \cite{CHNT}, and Connes' noncommutative geometry \cite{CoMo}, share a common algebraic formalism where algebraic structures on trees and its underlying combinatorics are central.

Let us give two closely related examples coming from the theory of numerical integration. First we mention the pioneering work of J.~Butcher on an algebraic theory of integration methods in the 1960s and 1970s \cite{Butcher1,HWL}. Butcher's $B$-series can be seen as a generalization of Taylor series, in which rooted trees naturally appear. They uniquely represent elementary differentials, as Cayley noticed already in his classical 1857 paper \cite{Cayley}. Motivated by the problem of extending Butcher's work to the construction of generalized Runge--Kutta methods for integration of differential equations evolving on Lie groups, Munthe-Kaas introduced in \cite{MK1,MK2} the notion of Lie--Butcher series. In applications of both Butcher and Lie--Butcher series, algebraic and related combinatorial structures on rooted trees (in non-planar and planar version, respectively,) play an essential role. Since then, identifying genuine algebraic structures became a useful part of the theory of numerical integration methods, see e.g.~\cite{CHV,ChaMu,LunMun1,LunMun2,Murua}. The other example evolved out of W.~Magnus' seminal 1954 paper \cite{Magnus}, where the author introduced a particular differential equation characterizing the exponential solution of differential equations for a linear operator in terms of a Lie series. The latter is known as Magnus expansion and has become a well-known tool in the solution and approximation theory of linear initial value problems \cite{BCOR,MielPleb,Strichartz}. Iserles and collaborators \cite{Iserles1,Iserles2,Iserles3} were the first to use planar tree structures in an intriguing way to study the Magnus expansion in the context of numerical integration.\\

Recently it became clear that in general most of the combinatorial structures on trees can be traced back to the fact that free pre-Lie and dendriform algebras are naturally described in terms of rooted trees and planar binary trees, respectively \cite{ChaLiv,Loday,LR}. Indeed, trees provide genuine examples for combinatorial objects spanning connected graded locally finite-dimensional vector spaces, on which rich and various extra algebraic patterns are given in explicit terms by so-called tree grafting operations. These algebraic structures are summarized by the notion of combinatorial Hopf algebra, see e.g.~\cite{Hoffman}.

As it turns out, both Butcher's $B$-series and Magnus' expansion are most naturally described in terms of pre-Lie algebras. Indeed, Chapoton \cite{Chap1} was the first to study $B$-series as genuine formal series of rooted trees in the free pre-Lie algebra in one generator. In fact, later it became clear that also Magnus' series is contained, though in more disguised form, in his 2002 preprint. See also Murua's work \cite{Murua} for a link of Magnus' series to the Butcher--Connes--Kreimer Hopf algebra of rooted trees, and the underlying pre-Lie algebra. Based on the work by Iserles et al., we studied in \cite{EM1,EM2} the Magnus expansion in the light of its underlying pre-Lie structure, using the corresponding dendriform algebra. Later we realized that Agrachev and Gamkrelidze \cite{AG}  wrote down the Magnus expansion in the general context of chronological algebras, which is another name for pre-Lie algebras, as early as 1981. The interpretation as a logarithm, however, necessitates the dendriform structure as well.\\  

This paper is a continuation of our work in \cite{EM1}. It aims at obtaining further insights into the fine structure of the Magnus expansion by using basic combinatorial methods steaming from the description of free dendriform algebra in terms of planar binary trees. This allows us to present a closed formula for the Magnus expansion. A dendriform algebra structure on the linear span of the symmetric groups \cite{LR2} allows us to derive the continuous Baker--Campbell--Hausdorff series. In other words, we recover the known Mielnik--Pleba\'nski--Strichartz formula for the (classical) Magnus expansion. Using fairly elementary tools, our result may be seen as a pedestrian approach to parts of Chapoton's et al.~\cite{Chap2,ChaPat} and Thibon's et al.~\cite{gelfand} work on the Magnus series, where descent algebra, operads, and the theory of non-commutative symmetric functions play a dominant role.\\

The paper is organized as follows: in Section \ref{sect:AlgPri} we introduce the required structures, both combinatorial (planar binary trees, planar rooted trees and Knuth's rotation correspondence) and algebraic (Rota--Baxter, dendriform and pre-Lie algebras). We show how the dendriform algebra structure on the vector space spanned by planar binary trees together with the associative product is transported through Knuth's rotation correspondence to the vector space spanned by planar rooted trees. We describe the classical Magnus expansion in Section \ref{sect:MagExp}, writing it in terms of the pre-Lie product given on the space of locally integrable matrix-valued numerical functions by:
\begin{equation*}
	(f\rhd g)(s):=\left[\int_0^s f(u)\,du,\,g(s)\right].
\end{equation*}
The corresponding dendriform products are given by:
\begin{equation*}
	(f\succ g)(s):=\left(\int_0^s f(u)\,du\right)g(s),\hskip 8mm (f\prec g)(s):=f(s)\left(\int_0^s g(u)\, du\right).
\end{equation*}
In Section \ref{sect:LDEs} we look for a closed formula (Theorem \ref{thm:dendMagnus}) in the completion of the free unital dendriform algebra, giving the logarithm of the solution $X$ of the linear dendriform equation:
\begin{equation*}
	X=\un+a\prec X.
\end{equation*}
A $q$-analog of this result, relying on the abstract notion of \textsl{descents} for planar binary trees can be found e.g.~in a recent paper by Chapoton \cite{Chap2}. In our approach however, we refrained from using tools from the theory of non-commutative symmetric functions \cite{gelfand, DHT}, and derived the formula directly by invoking elementary combinatorial methods. Through the rotation correspondence the planar rooted tree picture reveals to be particularly useful here, since the number of descents of a planar binary tree simply corresponds to the number of leaves of its planar rooted image, excluding the leftmost leaf. The formula in the free setting yields of course a similar formula in any complete filtered dendriform algebra (Corollary \ref{cor2:dendMagnus}). In the last section we recover from Theorem \ref{thm:dendMagnus} and Corollary \ref{cor2:dendMagnus} the well-known Mielnik--Pleba\'nski--Strichartz formula \cite{MielPleb, Strichartz} for the classical Magnus expansion, also know as continuous Baker--Campbell--Hausdorff series. For that purpose we use a dendriform algebra structure on the direct sum $\bigoplus_{n\ge 1}k[S_n]$, where $S_n$ is the permutation group of $n$ letters \cite{Foissy,LR2}, and the fact that the notion of descent for a planar binary tree matches well with the well-known corresponding notion for a permutation.\\

\noindent
{\bf{Acknowledgements:}} We thank H.~Munthe-Kaas and A.~Lundervold for discussions and remarks. The first author is supported by a Ram\'on y Cajal research grant from the Spanish government. Both authors were supported by the CNRS (GDR Renormalisation).\\


\section{Algebraic and combinatorial preliminaries}
\label{sect:AlgPri}

Let $k$ be a field of characteristic zero (in our case it will always be either $\RR$ or $\mathbb{C}$).


\subsection{Trees}
\label{sect:trees}

Recall that a tree $t$ is a connected and simply connected graph made out of vertices and edges, the sets of which we denote by $V(t)$ and $E(t)$, respectively.


\subsubsection{Planar binary trees}
\label{ssect:pbt}

A \textsl{planar binary tree} is a finite oriented tree given an embedding in the plane, such that all vertices have exactly two incoming edges and one outgoing edge. An edge can be internal (connecting two vertices) or external (with one loose end). The external incoming edges are the leaves. The root edge is the unique edge not ending in a vertex. For any planar binary tree $t$, a partial order on the set of its vertices $V(t)$ is defined as follows: $u,v \in V(t)$, $u<v$ if and only if there is a path from the root of $t$ through $u$ up to $v$.
$$
	\raise -4pt\hbox{$\Big\vert$}
	\qquad\
	\scalebox{1.5}{\tree}
	\qquad\
	\treeA
	\quad
	\treeB
	\qquad\
	\treeE
	\quad
	\treeD
	\quad
	\treeC
	\quad
	\treeG	
	\quad 
	\treeF \qquad \ldots
$$
The single edge $\vert$ is the unique planar binary tree without internal vertices. We denote by $T^{bin}_{pl}$ (resp.~$\Cal T^{bin}_{pl}$) the set (resp.~the linear span) of planar binary trees. A simple grading for such trees is given in terms of the number of internal vertices. Above we listed all planar binary trees up to degree three. The number of trees of order $n$ is given by the Catalan number $c_n = \frac{(2n)!}{(n+1)!n!}$. The first ones are $1, 1, 2, 5, 14, 42, 132, \ldots$. Alternatively, one can use the number of leaves. Observe that for any pair of planar binary trees $t_1,t_2$ we can build up a new planar binary tree via the grafting operation, $t_3:=t_1 \vee t_2$, i.e.~by considering the unique ${\mathsf Y}$-shaped planar binary tree $\raise 3pt\hbox{$\scalebox{0.8}\tree$}$, and replacing the left branch (resp.~the right branch) by $t_1$ (resp.~$t_2$). 
$$
	\vert \vee \vert = \tree
	\quad\
	\tree \vee \vert = \treeB
	\quad\
	\vert \vee \tree = \treeA
	\quad\
	\tree \vee \tree = \treeD
	\quad\
	\vert \vee \treeB = \treeG.
$$

Any planar binary tree $t\not =\vert$ obviously expresses itself as $t_1\vee t_2$ in a unique way. The tree $t_1$ (resp. $t_2$) is the {\sl left part\/} (resp. the {\sl{right part}}) of $t$. The grafting operation $\vee$ makes $T_{pl}^{bin}$ the free magma algebra with one generator: the binary operation $\vee$ shows no relation of any kind, in particular it is neither commutative nor associative. Notice that this product is of degree one with respect to the grading in terms of internal vertices, i.e.~ for two trees $t_1,t_2$ of degrees $n_1,n_2$, respectively, the product $t_1 \vee t_2$ is of degree $n_1+n_2+1$. However, with respect to the leave number grading this product is of degree zero. We call the trees $\tau^{(n)}_r$, $\tau^{(n)}_l$ recursively defined by $\tau^{(0)}_{r}:= \vert=:\tau^{(0)}_{l}$ and $\tau^{(n+1)}_r:= | \vee \tau^{(n)}_r$, $\tau^{(n+1)}_l:= \tau^{(n)}_l \vee |$:
$$
\scalebox{1.5}{\tree}
	\qquad\
	\treeB
	\quad\
	\treeE\
	\cdots
	\qquad\
	\treeA
	\quad
	\treeC\
	\cdots
$$ 
right and left combs, respectively.


\subsubsection{Planar rooted trees}
\label{ssect:prt}

A {\it{planar rooted tree}} is a finite oriented tree given an embedding in the plane, such that all vertices, except one, the \textsl{root}, have arbitrarily many incoming edges and one outgoing edge. The root vertex has no outgoing edge, and the leaves have no incoming edges. 
$$
	\racine
	\qquad\
	\arbrea
	\qquad\
	\arbrebb
	\quad
	\arbreba
	\qquad\
	\arbreca
	\quad
	\arbrecc
	\quad
	\arbrecd
	\quad
	\arbrece	
	\quad
	\arbrecb \quad\ \cdots
$$

The single vertex $\racine$ is the unique rooted tree without edges. Note that we put the root at the bottom of the tree. The set (resp.~the linear span) of planar non-empty rooted trees will be denoted by $T_{pl}$ (resp.~$\Cal T_{pl}$). A natural grading for such trees is given in terms of the number of edges. Another one is given by the number of vertices. Observe that any rooted tree of degree bigger than zero writes in a unique way:
\begin{equation*}
	t=B_+(t_1\cdots t_n),
\end{equation*}
where $B_+$ associates to the forest $t_1\cdots t_n$ the planar tree obtained by grafting all the planar trees $t_j$, $j=1,\ldots,n$, on a common root. 
$$
	B_+(\racine)=\arbrea ,
	\quad\
	B_+(\racine\racine)=	\arbrebb ,
	\quad\
	B_+(\arbrea\racine)=\arbrecc ,
	\quad\
	B_+(\racine\arbrea)=\arbrece ,
	\quad\
	B_+(\racine\racine\racine)=\arbrecd.
$$
Sometimes, one finds the notation $t=[t_1\cdots t_n]$ in the literature \cite{Butcher1}. Note that the order in which the branch trees are displayed has to be taken into account. We introduce the \textsl{left Butcher product} of two planar rooted trees $t=B_+(t_1\cdots t_n)$ and $u=B_+(u_1\cdots u_p)$:
\begin{equation}
\label{butcherProd}
	t \butcher u := B_+(t u_1\cdots u_p).
\end{equation} 
Hence, it is defined by connecting the root of $t$ via a new edge to the root of $u$ such that $t$  becomes the leftmost branch tree. Observe that it is neither associative nor commutative. Moreover, it is clear that any rooted tree $t=B_+(t_1\cdots t_n) \in T_{pl}$ of degree bigger than zero uniquely decomposes as $t= t_1 \butcher t_2.$ The rooted trees recursively defined by $\ell^{(0)}:=\racine=:c^{(0)}$, and $\ell^{(n+1)}:=\ell^{(n)} \butcher \racine$, $c^{(n+1)}:=\racine \butcher c^{(n)}$ are called ladder trees and corollas, respectively.


\subsubsection{Knuth's correspondence between planar binary and planar rooted trees}
\label{sssect:pbt2prt}

A natural question is how to relate the two sets of planar trees just presented. Knuth described in \cite{Knuth68} a natural way to do this, known as rotation correspondence between planar binary and planar rooted trees. We only give a recursive description of this bijection denoted as $\Phi: T^{bin}_{pl} \to T_{pl}$, by defining $\Phi(\vert):=\racine$ and:
\begin{equation}
\label{phi}
	\Phi(t_1\vee t_2):=\Phi(t_1)\butcher\Phi(t_2).
\end{equation}
This map is well-defined and bijective, with its inverse recursively given by:
\begin{equation}
\label{phi-inv}
	\hspace{-2cm}\Phi^{-1}\big(B_+(t_1\cdots t_n)\big) 
	= \Phi^{-1}(t_1)  \vee \Phi^{-1}\big(B_+(t_2 \cdots t_n)\big) 
	= {\scalebox{0.8}{\treebplus}}
\end{equation}
The first few terms write:
$$
	\Phi(|)=\racine
	\qquad\
	\Phi(\!\tree)=\arbrea
	\qquad
	\Phi({\scalebox{0.8}{\treeA}})=\arbrebb
	\qquad\
	\Phi({\scalebox{0.8}{\treeB}})=\arbreba
$$
$$
	\Phi({\scalebox{0.8}{\treeE}})=\arbreca
	\qquad\
	\Phi({\scalebox{0.8}{\treeD}})=\arbrecc
	\qquad\
	\Phi({\scalebox{0.8}{\treeC}})=\arbrecd
	\qquad\
	\Phi({\scalebox{0.8}{\treeG}})=\arbrece	
	\qquad\ 
	\Phi({\scalebox{0.8}{\treeF}})=\arbrecb.
\vspace{0.2cm}
$$
Observe the compatibility with the gradings by the number of internal vertices in $T^{bin}_{pl}$ and the number of edges in  $T_{pl}$.
This simple bijection implies that the left Butcher product (\ref{butcherProd}) is also purely magmatic. Left and right combs $\tau^{(n)}_l$, $\tau^{(n)}_r$ map via $\Phi$ to the ladder trees $\ell^{(n)}$ and corollas $c^{(n)}$, respectively. For reasons to become clear in the sequel, we ask the reader to note the equality between the number of those leaves of a planar binary tree which point to the left, and the number of leaves of the corresponding planar rooted tree.\\


\subsection{Pre-Lie algebras}
\label{ssect:pL}

Recall that a left pre-Lie algebra $(A, \rhd)$ is a $k$-vector space $A$ equipped with an operation $\rhd: A \otimes A \rightarrow A$ subject to the following relation:
$$
	(a \rhd b) \rhd c - a \rhd ( b \rhd c) = (b \rhd a) \rhd c - b \rhd (a \rhd c).
$$
See e.g.~\cite{Manchon} for a survey on pre-Lie algebras. A genuine example is the pre-Lie algebra of vector fields. Let $M$ be a differentiable manifold equipped with a flat, torsion-free connection $\nabla$. The space of vector fields $\chi(M)$ can be given the structure of a pre-Lie algebra by defining the product $f \rhd g = \nabla_f g$. In the case of $M = \mathbb{R}^n$ with its canonical flat and torsion-free connection we have that for $f = \sum_{i=1}^n f_i \partial_i$ and $g=\sum_{j=1}^n g_j \partial_j$:
$$
	f \rhd g := \sum_{i=1}^n \left(\sum_{j=1}^n f_j\partial_jg_i\right)\partial_i.
$$
Recall from Chapoton and Livernet \cite{ChaLiv} that the basis of the free pre-Lie algebra in one generator, $\mathcal{P}(\!\racine)$, can be expressed in terms of undecorated, non-planar rooted trees. The set (resp.~the linear span) of the latter will be denoted by $T$ (resp.~$\Cal T$). In $T$ too, any rooted tree of degree bigger than zero writes $t=B_+(t_1\cdots t_n)$. However, the order in which the branch trees are displayed plays no role. The pre-Lie product in $\mathcal{P}(\!\racine)$ becomes very explicit in terms of tree grafting, that is, $t_1 \curvearrowright t_2$ is given by summing over all trees resulting from grafting successively the tree $t_1$ to each vertex of $t_2$: 
$$
	t_1  \curvearrowright t_2 := \sum_{v \in V(t_2)} t_1 \curvearrowright_{v} t_2,
$$
where $\curvearrowright_{v}$ denotes the grating of the root of $t_1$ via a new edge to vertex $v$ of $t_2$.
$$
	\racine  \curvearrowright \!\racine =  \arbrea\ ,\;\
	(\racine  \curvearrowright \!\racine) \curvearrowright \!\racine =  \arbreba\ ,\;\
	\racine  \curvearrowright (\!\racine \curvearrowright \!\racine) =  \arbreba +  \arbrebb\ ,\;\
	\racine  \curvearrowright (\!\racine  \curvearrowright (\!\racine \curvearrowright \!\racine)) =
	\arbreca
	+
	3 \arbrece
	+
	\arbrecd
	+
	\arbrecb.
$$
Any tree can of course be written as a polynomial expression in the generator $\racine$ using $\curvearrowright$ and suitable parenthesizing. See \cite{AG,Segal} for the description of monomial bases for free pre-Lie algebra.

\smallskip 

Let $f$ be any smooth vector field on $\R^n$. To go from the free pre-Lie algebra on one generator to the pre-Lie algebra of vector fields one applies the elementary differential map $\mathcal{F}_f: \mathcal{P}(\!\racine) \rightarrow \chi(\RR^n)$, which is defined as the unique pre-Lie algebra morphism such that  $\mathcal{F}_f[\!\racine] = f$.  Using standard notations, for $t=B_+(t_1\cdots t_n) \in T$ we have:
$$
	\mathcal{F}_f[t](x) = f^{(n)}(x)\big(\mathcal{F}_f[t_1](x), \dots, \mathcal{F}_f[t_n](x)\big).
$$ 
Elaborating on the introduction of this paper, we mention that Butcher's $B$-series can be seen as series expansions $b(\alpha) = \sum_{t\in T}\big(\alpha(t)/\sigma(t)\big)t$ in (the appropriate completion of) $\mathcal{P}(\!\racine)$. The linear function $\alpha$ on $\Cal P(\!\racine)$ maps trees to $k$, and $\sigma(t)$ is the symmetry factor of the non-planar tree $t$. Passing to usual $B$-series amounts to applying the elementary differential map $\mathcal{F}_{hf}$, where $hf$ is a smooth vector field on $\mathbb{R}^n$ multiplied by the so-called step size parameter $h$:
\begin{equation*}
	B_f(\alpha;y) = \sum_{t\in T}h^{V(t)}\frac{\alpha(t)}{\sigma(t)}\mathcal{F}_f[t](y). 
\end{equation*}
Indeed, a canonical $B$-series consists of a formal power series in the step size parameter $h$ containing elementary differentials and arbitrary coefficients encoded in a function $\alpha$ on the set of rooted trees $T$. Further below we will see another natural example of pre-Lie algebra in the context of numerical integration methods. 

\smallskip

As a final remark we mention that composition of $B$-series leads to what is called Butcher's group, which clarified important aspects in the theory of Runge--Kutta methods. Later, Chartier, Hairer and Vilmart introduced a so-called substitution law for $B$-series in the context of backward error analysis. See \cite{CHV,LunMun2} for concise reviews.\\


\subsection{Rota--Baxter algebras}
\label{sssect:dpse}
 
Recall that a Rota--Baxter algebra is a $k$-algebra $A$ endowed with a $k$-linear map $R: A \to A$ that satisfies the relation
\begin{equation}
\label{RB}
    R(a)R(b) = R\big(R(a)b + aR(b) + \theta ab\big),
\end{equation}
where $\theta \in k$. The map $R$ is called a {\sl Rota--Baxter operator of weight $\theta$\/}. The map $\widetilde{R}:=-\theta \textrm{id}-R$ also is a weight $\theta$ Rota--Baxter map. Both images $R(A)$ and $\tilde{R}(A)$ are subalgebras in $A$. One may think of (\ref{RB}) as a generalized integration by parts identity. Indeed, a simple example is given by the classical integration by parts rule showing that the ordinary Riemann integral is a weight zero Rota--Baxter map. Other examples can be found for instance in \cite{EM1,EM2}.  Observe that for an associative Rota--Baxter algebra the two compositions $a*b:=R(a)b+aR(b)+\theta ab$ and $a \rhd b := [R(a),b] - \theta ba$ define a new associative product and a pre-Lie product, respectively.\ignore{The pre-Lie algebra with product $\rhd$ is denoted by $P_R$.}\\


\subsection{Dendriform algebras}
\label{ssect:DendAlg}

We introduce the notion of dendriform algebra~\cite{Loday} over $k$, which is a $k$-vector space $D$ endowed with two bilinear operations $\prec$ and $\succ$ subject to the following three axioms:
\begin{eqnarray}
	(a\prec b)\prec c  &=& a\prec(b \prec c + b \succ c)        	\label{A1}\\
  	(a\succ b)\prec c  &=& a\succ(b\prec c)   				\label{A2}\\
   	a\succ(b\succ c)  &=& (a \prec b + a \succ b)\succ c        	\label{A3}.
\end{eqnarray}
In a commutative dendriform algebra (also known under the name \textsl{Zinbiel algebra}), the left and right operations are identified, that is, $x \succ y = y \prec x$. Axioms (\ref{A1})-(\ref{A3}) imply that for $a,b \in D$ the composition:
\begin{equation}
\label{dendassoc}
	a * b := a \prec b + a \succ b
\end{equation}	
defines an associative product. Hence, a dendriform algebra is an associative algebra together with a bimodule structure on itself, such that the associative product splits into the sum of the left- and right-module structures. Moreover, dendriform algebras are at the same time pre-Lie algebras. Indeed, one verifies that the two products:
\begin{equation}
\label{def:prelie}
    a \rhd b:= a\succ b-b\prec a,
    \hskip 12mm
    a \lhd b:= a\prec b-b\succ a
\end{equation}
are left pre-Lie and right pre-Lie, respectively. That is, we have:
\begin{eqnarray*}
    (a\rhd b)\rhd c-a\rhd(b\rhd c)&=& (b\rhd a)\rhd c-b\rhd(a\rhd c),\\
    (a\lhd b)\lhd c-a\lhd(b\lhd c)  &=& (a\lhd c)\lhd b-a\lhd(c\lhd b).
\end{eqnarray*}
These two pre-Lie products vanish if the dendriform algebra is commutative. Recall that a left pre-Lie algebra is Lie admissible \cite{AG,ChaLiv}, that is:
\begin{equation*}
    [a,b]:=a\lhd b-b\lhd a
\end{equation*}
defines a Lie bracket. An analogous statement holds for right pre-Lie algebras. Moreover, the Lie brackets following from the associative operation (\ref{dendassoc}) and the pre-Lie operations (\ref{def:prelie}) all define the same Lie bracket. For any dendriform algebra $A$ we denote by $\overline A = A \oplus k.\un$ the corresponding dendriform algebra augmented by a unit $\un$, with the following rules:
\begin{equation*}
    a \prec \un := a =: \un \succ a
    \hskip 12mm
    \un \prec a := 0 =: a \succ \un,
\end{equation*}
implying $a*\un=\un*a=a$. Note that the equality $\un*\un=\un$ makes sense, but that $\un \prec \un$ and $\un \succ \un$ are not defined. \\

Now suppose that the dendriform $A$ is complete with respect to the topology given by a decreasing filtration $A=A^1\supset A^2\supset A^3\supset\cdots$ compatible with the dendriform structure, in the sense that $A^p\prec A^q\subset A^{p+q}$ and $A^p\succ A^q\subset A^{p+q}$ for any $p,q\ge 1$. In the unital algebra we can then define the exponential and logarithm map in terms of the associative product~(\ref{dendassoc}): 
$$
	\exp^*(x):=\sum_{n \geq 0} x^{*n}/n!  
	\quad\ {\rm{resp.}} \quad\ 
	\log^*(\un+x):=-\sum_{n>0}(-1)^nx^{*n}/n. 
$$
Let $L_{a \succ} \left( b \right) := a \succ b =: R_{\succ b} \left( a \right) $. Note that $L_{a \succ} L_{b \succ} = L_{a \ast b \succ}$ and $R_{\prec a} R_{\prec b} = R_{\prec b \ast a}$. We recursively define the set of dendriform words in $\overline{A}$ for fixed elements $x_1,\ldots, x_n \in A$, $n \in \mathbb{N}$ by:
 \allowdisplaybreaks{
\begin{eqnarray*}
    w^{(0)}_{\prec}(x_1,\ldots, x_n) &:=& \un =:w^{(0)}_{\succ}(x_1,\ldots, x_n) \\
    w^{(n)}_{\prec}(x_1,\ldots, x_n) &:=& x_1 \prec \bigl(w^{(n-1)}_\prec(x_2,\ldots, x_n)\bigr)\\
    w^{(n)}_{\succ}(x_1,\ldots, x_n) &:=& \bigl(w^{(n-1)}_\succ(x_1,\ldots, x_{n-1})\bigr)\succ x_n.
\end{eqnarray*}}
In case that $x_1=\cdots = x_n=x$ we simply write $w^{(n)}_{\prec}(x,\ldots, x)=x^{(n)}_{\prec}$ and 
 $w^{(n)}_{\succ}(x,\ldots, x)= x^{(n)}_{\succ}$.\\

A simple example of dendriform algebra is given in by the Riemann integral on an algebra $\mathcal{F}$ of locally integrable functions over $\mathbb{R}$. The dendriform left and right products are:
\begin{equation*}
	(f \prec g)(t):=f(t) \left(\int_0^tg(s)ds\right)
	\qquad
	(f \succ g)(t):=\left(\int_0^tf(s)ds\right) g(t).
\end{equation*}
With $I(f)(t):=\int_0^t f(s)ds$, the dendriform axioms simply encode integration by parts
\begin{eqnarray*} 
(f \prec g) \prec h &=& f I (g) I (h) = f I\big( g I (h)\big) + f I\big(I (g) h\big) 
				 = f \prec (g \prec h) + f \prec (g \succ h) \\
(f \succ g) \prec h &=& \big( I(f) g\big) I (h) = I(f)  \big(g I (h)\big)
   				 = f \succ (g \prec h) \\
f \succ (g \succ h) &=& I(f)  I(g) h = I\big(f I (g)\big)h + I\big(I (f) g\big) h 
				  = (f \prec g) \succ h + (f \succ g) \succ h.
\end{eqnarray*}
The pre-Lie product $(f \rhd g)$ is given by the bracket $[I(f),g].$ The last example generalizes to any associative Rota--Baxter algebra $(\Cal A,R)$ of weight $\theta$, giving rise to a dendriform algebra $(\Cal A,\prec,\succ)$ defined in terms of:
 \allowdisplaybreaks{
\begin{eqnarray*}
    a \prec b &:=& aR(b)+\theta ab =-a\widetilde{R}(b),\hskip 8mm a \succ b:=R(a)b.
\end{eqnarray*}}
The dendriform associative and left pre-Lie products are explicitly given for $a,b \in \Cal A$ by:
\begin{eqnarray}
\label{RBdoublePreLie}
	a*b=R(a)b+aR(b)+\theta ab \quad\ {\rm{resp.}} \quad\ a \rhd b = [R(a),b] - \theta ba.
\end{eqnarray}


\subsubsection{Dendriform algebra structure on planar trees}
\label{sssect:DendAlgTree}

In \cite{Loday} it was shown that planar binary trees different from $\vert$ generate the free dendriform algebra in one generator. The associative product for two trees $s=s_{1}\vee s_{2}$ and  $t=t_{1}\vee t_{2}$ in ${\Cal T'}^{bin}_{pl}:={\Cal T}^{bin}_{pl}-\{\vert\}$ is given recursively by:
\begin{equation*}
	s * t = s_{1}\vee (s_{2} * t) + (s * t_{1}) \vee t_{2}.
\end{equation*}
The two terms on the right define the dendriform compositions $\prec$ and $\succ$ respectively. A simple computation shows the following link between the dendriform structure and the magmatic product:
\begin{equation}
\label{magdend}
	s\vee t=s\succ\raise 3pt\hbox{$\scalebox{0.8}\tree$}\prec t.
\end{equation}
The tree $\vert$ can be taken as the unit for the corresponding augmented dendriform algebra. For any dendriform algebra $A$ there is a unique morphism $F_a :{ \Cal T'}^{bin}_{pl} \to A$. Using \eqref{magdend}, it is recursively given by $F_a(\raise 3pt\hbox{$\scalebox{0.8}\tree$})=a$ and:
\begin{eqnarray}
	F_a(t)	&=&	F_a(t_1\vee t_2)\nonumber \\
			&=&	F_a(t_1\succ\raise 3pt\hbox{$\scalebox{0.8}\tree$}\prec t_2) \nonumber\\
			&=&	F_a(t_1)\succ a\prec F_a(t_2).\label{def:dendmap}
\end{eqnarray}
This unital dendriform algebra structure on $ \Cal T^{bin}_{pl}$ transfers via Knuth's correspondence between planar binary and planar rooted trees (\ref{phi}) to the dendriform associative product defined for $s=s_1\butcher s_2,\, t=t_1\butcher t_2 \in \Cal T_{pl}$ by:
\begin{equation*}
	s \star t = s_{1} \butcher (s_{2} \star t) + (s \star t_{1}) \butcher t_{2}.
\end{equation*}
The tree $\racine$ is the unit for this product. In the free dendriform algebra with one generator $a$, the dendriform words $a^{(n)}_{\prec}$ and $a^{(n)}_{\succ}$ translate into the left and right comb trees, respectively. And via Knuth's correspondence they are mapped to the ladder trees and corolla trees, respectively. Observe that for rooted ladder trees we find:
\begin{eqnarray*}
	\ell^{(n)} \star \ell^{(m)} 	
	&=&  (\ell^{(n-1)} \butcher \ell^{(0)}) \star (\ell^{(m-1)} \butcher \ell^{(0)})\\
	&=& (\ell^{(n-1)} \butcher \ell^{(m)}) + (\ell^{(n)} \star \ell^{(m-1)} )  \butcher \ell^{(0)}\\
	&=&\sum_{r= 0}^{m}\Big(\cdots\big((\ell^{(n-1)}\butcher\ell^{(m-r)})\butcher\underbrace{\ell^{(0)}\big)
				\butcher\cdots\Big)\butcher\ell^{(0)}}_{\smop{$\scriptstyle r$ times}}.
\end{eqnarray*}
For instance:
$$
	\arbrea \star \arbrea = \arbrebb + \arbreba 
	\qquad\
	\arbrea \star \arbreba = \arbrece + \arbrecb + \arbreca
	\qquad\
	 \arbreba \star \arbrea = \arbrecc + \arbreca.
$$


\section{The Magnus expansion}
\label{sect:MagExp}

We start by recalling how to solve the linear initial value problem (IVP):  
$$
	\frac{d}{dt} X(t) = A(t)X(t),\quad X(0)=X_0,
$$ 
with $A(t)$ being, for instance, a matrix valued differentiable function. In fact, let us further simplify the problem by assuming scalar-valued functions, i.e.~we ignore any commutativity issues. Then the solution of the IVP in terms of the exponential map is $X(t)=\exp(\int_0^t A(x) dx)X_0$. Expanding the exponential and making use of the integration by parts rule immediately yields the identity:
\begin{equation}
\label{ex:classical}
	 1 + \int_0^tA(s_1) ds_1 \
	   + \int_0^t A(s_1)\int_0^{s_1}A(s_2) ds_2ds_1\
	   + \cdots
	   =\exp\Big(\int_0^t A(s) ds\Big).
\end{equation}
The left hand side, known as Dyson--Chen series, corresponds to the formal solution of the integral equation: 
$$
	X(t)=1+ \int_0^tA(s)X(s)ds,
$$
which is associated to the above IVP. This solution is still valid in the case where $A$ is a constant $n \times n$ matrix. However, this changes drastically in the general non-commutative case. Wilhelm Magnus described in a seminal 1954 paper \cite{Magnus} a particular differential equation for the matrix-valued function $\Omega(t;A)$: 
$$
	\dot{\Omega}(s;A) 	= \frac{ad_{\Omega(s;A)}}{e^{(ad_{\Omega(s;A)})} - 1}(A(s))
					= A(s) +  \sum_{n>0} \frac{B_n}{n!} ad_{\Omega(s;A)}^{(n)}(A(s)),
$$ 
such that the solution of the IVP writes:
$$
	X(t)=\exp\big(\Omega(t;A)\big)X_0.
$$
It is clear that $\Omega(0;A)=0$, hence $\Omega(t;A)=\int_0^t\dot\Omega(s;A)\,ds$. The $B_n$ are the Bernoulli numbers:
\begin{equation*}
	B_0=1,\ B_1=-\frac 12,\ B_2=\frac 16,\ B_4=-\frac{1}{30},\ldots \ {\rm{and}}\;\; B_{2k+1}=0 \hbox{ for }k\ge 1.
\end{equation*}
As usual, $ad^{(n)}_U(W)$ stands for the $n$-fold iterated Lie bracket $[U,[U,\cdots [U,W]]\cdots]$. Let us write down the first few terms of what is called Magnus' series, $\Omega(s;A)=\sum_{n \ge 0} \Omega_{n}(s;A)$, following from Picard iteration to solve the above recursion:
\begin{eqnarray*}
	\dot{\Omega}(s;\lambda A)&=&\lambda A(s) -  \frac{\lambda^2}{2}\left[\int_0^s\!\!\!\!A(x)dx ,A(s)\right] \\
		&&\qquad +\frac{\lambda^3 }{4}\left[\int_0^s\!\left[\int_0^y\!\!\!\!A(x)dx ,A(y)\right]dy,A(s)\right]
		+ \frac{\lambda^3}{12}\left[\int_0^s\!\!\!\!A(x)dx,\left[\int_0^s\!\!\!\!A(y)dy ,A(s)\right]\right] +\cdots,
\end{eqnarray*}
where we introduced a dummy parameter $\lambda$. Observe that the Lie bracket at order $\lambda^2$ simply serves to eliminate the second term on the righthand side:
$$
	\frac{1}{2}\Big(\int_0^t A(s) ds\Big)^2 = \frac{1}{2}\Big(\int_0^t \int_0^{s_1} A(s_1)A(s_2) ds_2ds_1 + 
												\int_0^t \int_0^{s_1} A(s_2) A(s_1)ds_2ds_1 \Big),
$$
such that up to second order in $\lambda$:
$$
	\exp\left( \int_0^t \lambda A(s) -  \frac{\lambda^2}{2}\int_0^t \left[\int_0^s\!\!\!\!A(x)dx ,A(s)\right] ds +\mathcal{O}(3)\right) 
	=  \lambda\!\! \int_0^t A(s) + \lambda^2\!\! \int_0^t \int_0^{s_1} A(s_1)A(s_2) ds_2ds_1 +\mathcal{O}(\lambda^3).
$$
Let us remark that since its publication, Magnus' paper received much attention and triggered important progress in both applied mathematics and physics. We refer the reader to \cite{BCOR} for  a comprehensive overview of its applications including a concise review of its background. Iserles et al. \cite{Iserles1,Iserles2,Iserles3} used planar tree to explore Magnus' expansion.  Magnus' result has also been explored from a more algebraic-combinatorial perspective using operads, pre-Lie algebras, dendriform algebras and noncommutative symmetric functions \cite{Chap1,Chap2,EM1,EM2,gelfand,Murua}. \\


\subsection{The pre-Lie Magnus expansion}
\label{ssect:pLMagExp}

Motivated by Spitzer's work \cite{Spitzer} in probability theory, Baxter~\cite{Baxter}, see also \cite{Atkinson}, followed a more general approach to the above IVP suggesting to look at the following fixpoint equation:
\begin{equation}
\label{IVP}
    X = 1 + \lambda R(aX),
\end{equation}
in $A[[\lambda]]$, where $A$ is a commutative unital algebra. The linear map $R$ on $A$ is supposed to satisfy the Rota--Baxter relation of scalar weight $\theta$ (\ref{RB}). The solution of (\ref{IVP}) is known as Spitzer's identity. In \cite{EM1} we generalized this to a noncommutative Rota--Baxter algebra $A$ by showing that equation (\ref{IVP}) is solved by $X(\lambda a)=\exp(R(\Omega'(\lambda a))$, where:
\begin{equation}
\label{plm-ag}
	\Omega'(\lambda a)=\lambda a + \sum_{n>0} \frac{B_n}{n!} L_{\Omega'(a) \rhd}^{(n)}\;(\lambda a)
\end{equation}
denotes the {\it{pre-Lie Magnus expansion}}. Here $L_{x\rhd}(y):=x \rhd y$ where $\rhd$ is the pre-Lie product defined in (\ref{RBdoublePreLie}). The prime notation shall remind the reader of Magnus' original differential equation. The first few terms are:
\begin{equation*}
	\Omega'(\lambda a)=\lambda  a - \lambda^2 \frac 12 a\rhd a 
					+ \lambda^3 \left(\frac 14 (a\rhd a)\rhd a +  \frac 1{12} a\rhd(a\rhd a)\right)+\cdots.
\end{equation*}
The reader is refereed to \cite{EM1,EM2} for more details. This leads to the following so-called noncommutative Spitzer identity:
\begin{equation}
\label{Spitzer}
	 1 + \lambda R(a)\
	    + \lambda^2R(aR(a))\
	    + \lambda^3R(aR(aR(a)))+ \cdots
	   =\exp\Big(R(\Omega'(\lambda a))\Big).
\end{equation}

Let us briefly note that proceeding analogously to the example of $B$-series in Paragraph \ref{ssect:pL}, we may abstract the pre-Lie Magnus expansion into (the appropriate completion of)  free pre-Lie algebra in one generator $\Cal P(\!\racine)$: 
$$
	\omega'(\beta)= \sum_{t\in T}\big(\beta(t)/\sigma(t)\big)t.
$$ 
Now, to go from the free pre-Lie algebra on one generator to the pre-Lie algebra $(\Cal A,\rhd)$ with the pre-Lie product defined in (\ref{RBdoublePreLie}), one applies for any $a\in\Cal A$ the map $\mathcal{F}_a: \mathcal{P}(\!\racine) \rightarrow \Cal A$, which is defined as the unique pre-Lie algebra morphism such that  $\mathcal{F}_a[\!\racine] = a$. For rooted trees of higher orders, we must recall that we can choose a monomial basis of $\mathcal{P}(\!\racine)$. Once the monomial basis is fixed, the decomposition of a tree $t \in \Cal T$ of order $n$ in this basis gives rise to a unique polynomial $p_{\curvearrowright}^t(\!\racine)$ of order $n$ in the generator $\racine$. Then $\mathcal{F}_a[t] = \mathcal{F}_a[p_{\curvearrowright}^t(\!\racine)] := p_\rhd^t(a)$, and the series $\omega'(\beta)$ in $\mathcal{P}(\!\racine)$ is mapped to $\Omega'(a)$. Of course, this picture demands for more details, such as for instance the way to chose the monomial basis. Moreover, we already remarked in \cite{EM1,EM2} that this approach will lead to a Magnus expansion with fewer terms than the classical series. A more complete account of this perspective on the Magnus expansion will be presented in a forthcoming work.


\subsection{Pre-Lie Magnus expansion and the group of formal flows}
\label{ssect:prelieMagnus}

Using the dendriform product $\prec$ given by $x\prec y=xR(y)$ (we put the weight $\theta$ to zero for simplicity), equation \eqref{Spitzer} in the last paragraph reads:
\begin{equation*}
	1+R\Big(\lambda a+\lambda^2 a\prec a +\lambda^3 a\prec(a\prec a)+\cdots\Big)
	=\exp\Big(R(\Omega'(\lambda a))\Big).
\end{equation*}
Using the associative product $*=\prec+\succ$ given by $x*y=R(x)y+xR(y)$ and extending the linear map $R$ to the associated unital dendriform algebra by setting $R(\un):=1$, yields:
\begin{equation*}
	X=\exp^*\big(\Omega'(\lambda a)\big),
\end{equation*}
where:
\begin{equation*} 
	X 	= \sum_{n\ge 0} (\lambda a)^{(n)}
		= \un + \lambda a + \lambda^2 a\prec a 
			+ \lambda^3 a \prec \left( a\prec a \right) 
			+ \lambda^4 a \prec \left( a \prec \left( a\prec a \right) \right) + \cdots
\end{equation*}
is the solution of the equation $X=\un+\lambda a \prec X$.

In \cite{EM1,EM2} we showed in the general dendriform algebra setting the following 
\begin{thm} \label{thm:pLMagnus}
{\rm{(\cite{EM1,EM2})}}
The element $\Omega' = \log^*(X(\lambda a))$ in $A$ satisfies the recursive formula similar to \eqref{plm-ag}:
\begin{equation*}
	\Omega' = \frac{L_{\Omega' \vartriangleright}}{e^{L_{\Omega' \vartriangleright} } - 1} (\lambda a) 
	   = \sum_{m \geqslant 0} \frac{B_m}{m!}  L_{\Omega' \rhd}^{(m)} (\lambda a) 
\end{equation*}
with $B_m$ the $m$-th Bernoulli number. The first few terms are:
\begin{equation*}
	\Omega'(\lambda a)=\lambda  a - \lambda^2 \frac 12 a\rhd a 
					+ \lambda^3 \left(\frac 14 (a\rhd a)\rhd a + \frac 1{12} a\rhd(a\rhd a)\right)+\cdots.
\end{equation*}\\
\end{thm}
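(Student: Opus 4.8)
The plan is to reduce the statement to a single \emph{key identity} linking the dendriform fixpoint equation to the pre-Lie product, and then to invert a formal operator. Throughout write $x:=\lambda a\in A^1$, so that the equation defining $X$ reads $X=\un+x\prec X$, i.e.\ $x\prec X=X-\un$, while $\Omega'=\log^*(X)$ means $X=\exp^*(\Omega')$ with $\Omega'$ in the augmentation ideal. Because each application of $\prec$, $\succ$ or of $L_{\Omega'\rhd}$ strictly raises the filtration degree once $x,\Omega'\in A^1$, every series below converges in the complete filtered topology and all manipulations are order-by-order rigorous. The key identity I would aim for, with $L:=L_{\Omega'\rhd}$, is
\[
	\exp^*(\Omega')-\un=\Big(\frac{e^{L}-1}{L}(\Omega')\Big)\prec\exp^*(\Omega').
\]

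To prove it I would introduce the one-parameter family $F(s):=\exp^*(s\Omega')$ together with $G(s):=\frac{e^{sL}-1}{L}(\Omega')$, both well defined as formal power series in $s$ whose coefficients have strictly increasing filtration degree, and set $H(s):=F(s)-\un-G(s)\prec F(s)$. Then $H(0)=0$, and the heart of the argument is to show that $H$ satisfies the linear filtered equation $H'(s)=\Omega'\succ H(s)$; since $\Omega'\succ(\cdot)$ strictly raises degree, the unique solution is $H\equiv 0$, which at $s=1$ is exactly the key identity.

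The computation of $H'$ is a short manipulation with the dendriform axioms, starting from $F'(s)=\Omega'*F(s)$ and $G'(s)=\Omega'+\Omega'\rhd G(s)$ (the latter because $L\cdot\frac{e^{sL}-1}{L}=e^{sL}-1$, so $e^{sL}(\Omega')=\Omega'+L(G)$). Axiom \eqref{A1} rewrites $G\prec(\Omega'*F)$ as $(G\prec\Omega')\prec F$; the definition \eqref{def:prelie} of $\rhd$ collapses $\Omega'\rhd G+G\prec\Omega'$ into $\Omega'\succ G$; and axiom \eqref{A2} together with the unital rule $\Omega'\succ\un=0$ turns $(\Omega'\succ G)\prec F$ into $\Omega'\succ(G\prec F)$. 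Substituting $G\prec F=F-\un-H$ and collecting the terms makes everything cancel except $H'=\Omega'\succ H$, as wanted. The \textbf{main obstacle} is precisely this step: one must guess the right auxiliary family $G(s)$ and then see that axioms \eqref{A1}, \eqref{A2} and the definition of $\rhd$ conspire to close the differential equation; everything else is formal inversion.

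Finally, evaluating the key identity at $s=1$ gives $\big(\frac{e^{L}-1}{L}(\Omega')\big)\prec X=x\prec X$, i.e.\ $Z\prec X=0$ for $Z:=\frac{e^{L}-1}{L}(\Omega')-x$. Since $Z\prec X=Z+Z\prec(X-\un)$ and $Z\prec(X-\un)$ strictly raises the degree of the lowest component of $Z$, the filtration forces $Z=0$, so $\frac{e^{L}-1}{L}(\Omega')=x$. The operator $\frac{e^{L}-1}{L}=\id+\tfrac12 L+\cdots$ is $\id$ plus a degree-raising map, hence invertible, with inverse $\frac{L}{e^{L}-1}=\sum_{m\ge 0}\frac{B_m}{m!}L^m$ given by the defining generating function of the Bernoulli numbers. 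Applying it to both sides yields
\[
	\Omega'=\frac{L_{\Omega'\rhd}}{e^{L_{\Omega'\rhd}}-1}(\lambda a)=\sum_{m\ge 0}\frac{B_m}{m!}L_{\Omega'\rhd}^{(m)}(\lambda a),
\]
which is the asserted recursion; the first terms are recovered by expanding the right-hand side to third order. This is, in essence, the argument of \cite{EM1,EM2}.
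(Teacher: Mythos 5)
Your argument is correct. The paper itself offers no proof of Theorem~\ref{thm:pLMagnus} --- it is imported from \cite{EM1,EM2} --- but your key identity $X-\un=\bigl(\tfrac{e^{L}-1}{L}(\Omega')\bigr)\prec X$ is exactly the statement that the map $W(a)=e^{L_{a\rhd}}\un-\un$ of Section~\ref{ssect:flow} sends $\Omega'$ back to $\lambda a$, which is the route taken in the cited references; your derivation of that identity via the auxiliary family $H(s)$ and axioms (\ref{A1}), (\ref{A2}) together with the unital conventions is sound, and the two filtration arguments (forcing $H\equiv 0$ and then $Z=0$) legitimately close the proof.
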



\subsection{Group of formal flows}
\label{ssect:flow}

In \cite{EM3} we adapted a result on the Baker--Campbell--Hausdorff formula by Agrachev and Gamkrelidze \cite{AG} to dendriform algebras. Recall that a left pre-Lie algebra $A$ is called {\sl chronological algebra\/} in \cite{AG}, and that the left pre-Lie identity rewrites as:
\begin{equation*}
	L_{[a,b]\rhd}=[L_{a\rhd},L_{b\rhd}],
\end{equation*}
where as above $L_{a\rhd}:A\to A$ is defined by $L_{a\rhd}(b)=a\rhd b$, and where the bracket on the left-hand side is defined by $[a,b]:=a\rhd b-b\rhd a$. Recall that as a consequence this bracket satisfies the Jacobi identity. We denote by $A_{Lie}$ the Lie algebra with the aforementioned bracket. Suppose now that $A$ is a left pre-Lie algebra endowed with a decreasing filtration, namely $A=A_1\supset A_2\supset A_3\supset\cdots$, such that the intersection of the $A_j$'s reduces to $\{0\}$, and such that $A_p\rhd A_q\subset A_{p+q}$. Suppose moreover that $A$ is complete with respect to this filtration. The Baker--Campbell--Hausdorff formula:
\begin{equation*}
	C(a,b)=a+b+\frac 12[a,b]+\frac 1{12}([a,[a,b]]+[b,[b,a]])+\cdots
\end{equation*}
endows $A$ with a structure of pro-unipotent group. This group admits a more transparent presentation as follows. First, introduce a fictitious unit $\un$ such that $\un\rhd a=a\rhd\un=a$ for any $a\in A$, and define $W: A\to A$ by: 
$$
	W(a):=e^{L_{a\rhd}}\un-\un=a+\frac 12 a\rhd a+\frac 16 a\rhd(a\rhd a)+\cdots.
$$ 
The application $W$ is a bijection. The inverse is the pre-Lie Magnus expansion $\Omega'(a)$ introduced in Theorem \ref{thm:pLMagnus}. Transferring the BCH product by means of the map $W$, namely:
\begin{equation}
\label{diese}
	a\# b :=W\Big(C\big(\Omega(a),\Omega(b)\big)\Big),
\end{equation}
we have $W(a)\#W(b)=W\big(C(a,b)\big)=e^{L_{a\rhd}}e^{L_{b\rhd}}\un-\un$, hence $W(a)\#W(b)=W(a)+e^{L_{a\rhd}}W(b)$. The product $\#$ is thus given by the simple formula:
\begin{equation*}
	a\# b=a+e^{L_{\Omega(a)\rhd}}b.
\end{equation*}
The inverse is given by $a^{\#-1}=W\bigl(-\Omega(a)\big)=e^{-L_{\Omega(a)\rhd}}\un-\un$. In particular, when the pre-Lie product $\rhd$ is associative, this simplifies to $a\#b=a\rhd b+a+b$ and $a^{\#-1}=\frac 1{1+a}-1=\sum_{n\ge 1}(-1)^na_n$. If $(A,\rhd)$ and $(B,\rhd)$ are two such pre-Lie algebras and $\psi:A\to B$ is a filtration-preserving pre-Lie algebra morphism, it is immediate to check that
for any $a,b\in A$ we have:
\begin{equation*}
	\psi(a\# b)=\psi(a)\#\psi(b).
\end{equation*}
In other words, the group of formal flows is a functor from the category of complete filtered pre-Lie algebras to the category of groups. Concerning linear dendriform equations (\ref{eq:dendri-linear}) in a complete filtered unital dendriform algebra $A$ we observe the following interesting fact: for any collection $(a_1,\ldots,a_n)$ in $A$ with filtration degree $\ge 1$, the product  $X=X(a_1)*\cdots *X(a_n)$ where $X(a_i)$ is the solution of the equation $X(a_i)=\un+a_i\prec X(a_i)$ is the solution of the linear dendriform equation $X=\un+a\prec X$, with:
\begin{equation*}
	a=a_1\#\cdots\# a_n,
\end{equation*}
which also writes explicitly:
\begin{eqnarray*}
	a	&=& 	a_1 + \sum_{j=1}^{n-1}  e^{L_{\Omega(a_1 \# \cdots \# a_j)\rhd}}a_{j+1}\\
		&=&	a_1+e^{L_{\Omega(a_1)\rhd}}a_2+\cdots+e^{L_{\Omega(a_1)\rhd}}\cdots e^{L_{\Omega(a_{n-1})\rhd}}a_n.\\
\end{eqnarray*}


\section{Linear dendriform equations}
\label{sect:LDEs}

Let us emphasize that, although the series \eqref{plm-ag} has been known for a long time in the pure pre-Lie context \cite{AG}, the results in \cite{EM1,EM2}, interpreting this series as a logarithm, were obtained in the more restricted context of dendriform algebras. 

The starting point is the linear dendriform equation in $\Cal A[[\lambda]]$ below, where $\Cal A$ is now any unital dendriform algebra: 
\begin{equation}
\label{eq:dendri-linear} 
	X = \un + \lambda a \prec X
\end{equation}
for $a \in \Cal A$. Its formal solution is:
$$
	X= \sum_{n\ge 0} (\lambda a)^{(n)}_{\prec}= \un +  \lambda a 
		+ \lambda^2 a \prec a 
		+ \lambda^3 a \prec \left( a \prec a \right) 
		+ \lambda^4 a \prec \left( a \prec \left( a \prec a \right)\right) +\cdots. 
$$

\begin{rmk}{\rm{
In this work we deliberately have suppressed any Hopf algebra parlance. However, a few words are in order. Recall that the associative algebra $(\Cal T^{bin}_{pl},*)$ is the free algebra generated by the elements $\vert\vee T,\,T\in\Cal T^{bin}_{pl}$ \cite[Theorem 3.8]{LR}. The subalgebra generated by the right combs $\tau_r^{(n)}=\raise 3pt\hbox{$\scalebox{0.8}\tree$}_\prec^{(n)},\,n\ge 0$ is free and is a cocommutative Hopf subalgebra $\Cal H$ of $\Cal T^{bin}_{pl}$. The left combs $\tau_l^{(n)}=\raise 3pt\hbox{$\scalebox{0.8}\tree$}_\succ^{(n)},\,n\ge 0$ also belong to $\Cal H$. It follows that $\Cal H$ is isomorphic, as a Hopf algebra, to the Hopf algebra of noncommutative symmetric functions \cite{gelfand}. In Hopf algebraic terms the solution $X= \sum_{n\ge 0} (\lambda a)^{(n)}_{\prec}$ of (\ref{eq:dendri-linear}) is characterized as group-like, that is $\Delta(X)=X \otimes X$.}}\\
\end{rmk}


\subsection{A closed form for the logarithm}
\label{ssect:logOmega}

In this section, discarding the pre-Lie product, we give an explicit expression of $\log^*(X)$ in the free \textsl{dendriform} algebra in one generator\footnote{We recover the $q=1$ case of a $q$-analog formula by F.~Chapoton (see \cite[Proposition 5.10]{Chap2}).}. For this we use Knuth's rotation correspondence and consider the representation in terms of planar rooted trees rather than planar binary trees. The generator $a$ corresponds to the tree $\arbrea$. Then the solution $X$ is given by the sum of rooted ladder trees, $X=\un + \sum_{n > 0} \ell^{(n)} =: \un + L$. Here we identify $\un=\ell^{(0)}=\racine$. 
\begin{eqnarray*}
	\log^\star(X) = \log^\star(\un + L) &=& \sum_{n>0}\frac{(-1)^{n+1} L^{\star n}}{n}\\
	&=&\sum_{n >0}  \sum_{k>0} -\frac{(-1)^k}{k} 
					\sum_{i_1 + \cdots + i_k=n \atop i_j>0, j=1,\ldots,k } \ell^{(i_1)} \star \cdots \star \ell^{(i_k)}.
\end{eqnarray*}
We have omitted the parameter $\lambda$. Recall that the degree $|\ell^{(n)}|$ of the tree $\ell^{(n)}$ is equal to $n$, i.e.~its number of edges. The series above makes sense in the completion of the free dendriform algebra $\widehat {\Cal A}$ with respect to the grading. Recall that a ladder tree with $n$ edges stands for the dendriform word $a^{(n)}_{\prec}$, which in the classical example (\ref{ex:classical}) corresponds to the $n-1$ fold iterated integral $a(t)I(aI(a \cdots I(a)))(t)$.

\begin{rmk}{\rm{
Since $X$ is group-like, the logarithm is given by applying the eulerian idempotent, that is, $\log^\star(X)=\log^\star(\id)(X).$
}}\end{rmk}

\begin{thm} \label{thm:dendMagnus}
The element $\Omega' = \log^\star(X)$ in $\widehat {\Cal A}$ is given by the formula:
\begin{equation}
\label{eq:dendMagnus}
	\Omega' = \sum_{n > 0} \frac{1}{n} \sum_{\tau \in \Cal{T}_{pl} \atop |\tau|=n} \frac{(-1)^{\Cal{L}(\tau)-1}}{\binom{n-1}{\Cal{L}(\tau)-1}} \tau,
\end{equation}
where $\Cal{L}(\tau)$ denotes the number of leaves of the planar rooted tree $\tau$, and $|\tau|$ its degree, i.e.~its number of edges.
\end{thm}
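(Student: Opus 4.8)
The plan is to extract the coefficient of a fixed planar rooted tree $\tau$ from the expansion of $\log^\star(\un+L)$ already displayed before the statement, and to show that it collapses to the claimed closed form. Writing $n=|\tau|$ and $\ell=\Cal L(\tau)$, the coefficient of $\tau$ in $\Omega'=\log^\star(X)$ equals
\[
  \sum_{k>0}\frac{(-1)^{k+1}}{k}\,N_k(\tau),
  \qquad
  N_k(\tau):=\sum_{i_1+\cdots+i_k=n \atop i_j\ge 1}
    \big[\tau:\ell^{(i_1)}\star\cdots\star\ell^{(i_k)}\big],
\]
where $[\tau:\cdots]$ is the coefficient of $\tau$ in the indicated $\star$-product of ladder trees. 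Everything thus reduces to understanding these products of ladders, for which I would exploit the defining recursion $s\star t=s_1\butcher(s_2\star t)+(s\star t_1)\butcher t_2$. Specializing $s=\ell^{(m)}=\ell^{(m-1)}\butcher\racine$ yields the clean rule $\ell^{(m)}\star t=\ell^{(m-1)}\butcher t+(\ell^{(m)}\star t_1)\butcher t_2$, which shows that left-multiplication by a ladder inserts a ladder along the \emph{leftmost spine} of $t$: at each spine vertex the incoming ladder either branches off (creating a new leftmost child, hence a new leaf) or travels further down before branching.

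The combinatorial heart of the argument, and the step I expect to be the main obstacle, is the following claim: the product $\ell^{(i_1)}\star\cdots\star\ell^{(i_k)}$ is multiplicity-free, and $\tau$ occurs in it if and only if the composition $(i_1,\ldots,i_k)$ refines a canonical composition $\gamma(\tau)=(g_1,\ldots,g_\ell)$ of $n$ into exactly $\ell=\Cal L(\tau)$ parts. The fact that the number of parts is the leaf number is precisely the incarnation, through Knuth's correspondence, of the remark noted at the end of Subsection \ref{sssect:pbt2prt} that the leaves of $\tau$ (bar the leftmost one) count the descents of $\Phi^{-1}(\tau)$; accordingly $\gamma(\tau)$ is the associated descent composition. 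Proving this claim is where the spine-insertion picture must be turned into a bijection: I would argue by induction on $k$ that inserting the ladders $\ell^{(i_k)},\ell^{(i_{k-1})},\ldots$ one at a time produces each admissible $\tau$ exactly once, two consecutive ladders fusing into a single spine segment (a refinement step) precisely when no new leaf is created, and branching apart otherwise. The delicate points are to verify that every refinement of $\gamma(\tau)$ is realized, that no other composition contributes, and that no repetition or cancellation occurs; the small cases $\ell^{(1)}\star\ell^{(1)}$, $\ell^{(1)}\star\ell^{(2)}$, $\ell^{(2)}\star\ell^{(1)}$ and $\ell^{(1)}\star\ell^{(1)}\star\ell^{(1)}$ already exhibit the pattern and are a good guide.

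Granting the claim, $N_k(\tau)$ is the number of compositions of $n$ into $k$ parts refining $\gamma(\tau)$. Splitting each part $g_j$ into $c_j\ge 1$ pieces with $\sum_j c_j=k$ and using the Vandermonde convolution $\sum_{c_1+\cdots+c_\ell=k}\prod_{j=1}^\ell\binom{g_j-1}{c_j-1}=\binom{n-\ell}{k-\ell}$ (valid since $\sum_j(g_j-1)=n-\ell$) gives $N_k(\tau)=\binom{n-\ell}{k-\ell}$, which depends only on $n$ and $\ell$. It then remains to evaluate
\[
  \sum_{k>0}\frac{(-1)^{k+1}}{k}\binom{n-\ell}{k-\ell}
  =(-1)^{\ell+1}\sum_{j=0}^{n-\ell}\frac{(-1)^{j}}{j+\ell}\binom{n-\ell}{j}
  =(-1)^{\ell-1}\frac{(\ell-1)!\,(n-\ell)!}{n!},
\]
the last equality being the Beta-integral identity $\sum_{j=0}^{M}\binom{M}{j}\frac{(-1)^j}{j+\ell}=\frac{(\ell-1)!\,M!}{(\ell+M)!}$ with $M=n-\ell$ (equivalently $\int_0^1 x^{\ell-1}(1-x)^{n-\ell}\,dx$). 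Since $(-1)^{\ell-1}(\ell-1)!(n-\ell)!/n!=\frac1n(-1)^{\ell-1}/\binom{n-1}{\ell-1}$, this is exactly the coefficient appearing in \eqref{eq:dendMagnus}; summing over all $\tau\in\Cal T_{pl}$ with $|\tau|=n$ and over $n>0$ completes the proof.
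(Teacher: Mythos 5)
Your proposal follows essentially the same route as the paper: both reduce the theorem to the claim that $\ell^{(i_1)}\star\cdots\star\ell^{(i_k)}$ is multiplicity-free and contains a given $\tau$ exactly when $(i_1,\ldots,i_k)$ refines a canonical composition of $|\tau|$ into $\Cal L(\tau)$ parts (the paper constructs this composition explicitly from the leaf-to-spine path lengths), and both then evaluate the resulting alternating sum via the Beta integral --- the paper through a dummy parameter $\alpha$ and termwise integration, you through Vandermonde plus the Beta sum identity, which is the same computation. The combinatorial claim you flag as the main obstacle is likewise only asserted, not proved in detail, in the paper, so your sketch is at a comparable level of rigor.
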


The first few terms are:
\begin{equation*}
	\Omega'=  \arbrea - \frac 12 \arbrebb + \frac 12 \arbreba
					+ \frac 13 \arbreca 
					-  \frac 16 \arbrecc
					+ \frac 13 \arbrecd
					-  \frac 16 \arbrece
					-  \frac 16 \arbrecb
					+\cdots.
\end{equation*}

\begin{proof}
First observe that for any ordered composition $n=i_1+\cdots+i_k$ of some positive integer $n$, the expression $\ell^{(i_1)}\star\cdots\star\ell^{(i_k)}$ is a linear combination of planar rooted trees with $n$ edges, with coefficients equal to $0$ or $1$. Moreover, for any planar rooted tree $\tau$ with $n$ edges and $k$ leaves, there is a unique ordered composition $n=i_1+\cdots+i_k$ of the integer $n$ such that:

\begin{enumerate}
\item 
The corresponding $\star$-monomial of ladder trees $\ell^{(i_1)}\star\cdots\star\ell^{(i_k)}$ contains the tree $\tau$ precisely once.
\item
For any other composition $n=j_1+\cdots+j_r$ of the integer $n$, the $\star$-monomial $\ell^{(j_1)}\star\cdots\star\ell^{(j_r)}$ contains $\tau$ with a nontrivial coefficient only if the composition of the integer $n$ is finer than the first one.
\end{enumerate}

Recall that each leaf of a planar rooted tree is connected to the root by a unique shortest path, i.e.~including a minimal number of edges. The composition $(i_1,\ldots,i_k)$ is defined as follows: first we number the leaves consecutively from left to right. The rightmost leaf is linked down to the root by the path $\ell^{(i_k)}$ of length $i_k$ (i.e.~the height of the rightmost leaf). For any $s \in\{1,\ldots,k-1\}$, $i_{s}$ is the length of the path $\ell^{(i_s)}$ from leaf number $s$ to the vertex lying on the unique path joining leaf number $s+1$ down to the root. As an example the following six-leaved tree with 13 edges:
$$
	{\scalebox{0.8}{\colotree}}
$$
is associated to the ordered composition $13=2+1+1+3+2+4$ of its degree.

Next we write for fixed $n>0$:
$$
S_n(-1):=\sum_{k>0} -\frac{(-1)^k}{k} 
					\sum_{i_1 + \cdots + i_k=n \atop i_j>0, j=1,\ldots,k } \ell^{(i_1)} \star \cdots \star \ell^{(i_k)}
					=\sum_{\tau \in \Cal T_{pl} \atop |t|=n} s_n(\tau)\tau.
$$ 
The coefficients  in the last equality follow by projection, $s_n(\tau):=\langle Z_{\tau},S_n \rangle$, where $Z_{\tau}(\tau')=\delta_{\tau,\tau'}$. Now, we introduce a dummy parameter $\alpha$ in the sum:
$$
	S_n(\alpha):=\sum_{k>0} -\frac{\alpha^k}{k} 
					\sum_{i_1 + \cdots + i_k=n \atop i_j>0, j=1,\ldots,k } \ell^{(i_1)} \star \cdots \star \ell^{(i_k)},
$$
such that $\int_0^{-1} \frac{d}{d\alpha}S_n(\alpha)d\alpha = S_n(-1).$   Looking first at rooted ladder trees, we observe that in $S_n(\alpha)$ only one type of rooted ladder tree appears, i.e.~the one of length $n$, and:
$$
	\dot{S}_n(\alpha)=\sum_{k>0} - \alpha^{k-1} 
					\sum_{i_1 + \cdots + i_k=n \atop i_j>0, j=1,\ldots,k } \ell^{(i_1)} \star \cdots \star \ell^{(i_k)}
					= \sum_{k>0} - \alpha^{k-1} \binom{n-1}{k-1}\ell^{(n)} + \; {\small\textsl{{non-ladder\ rooted\ trees}}}.
$$
Here we have used that the total number of length $k$ ordered compositions of $n$ is $\binom{n-1}{k-1}$. Projecting onto the ladder part, and integrating, we find the number of rooted ladder trees $\ell^{(n)}$ in the sum $S_n(-1)$:
$$
	s_n(\ell^{(n)})	=\langle Z_{\ell^{(n)}},S_n(-1) \rangle = \int_{0}^{-1}  \sum_{k>0} -\alpha^{k-1} \binom{n-1}{k-1} d\alpha 
				= \int_{0}^{-1} -(1+\alpha)^{n-1} d\alpha = \frac{1}{n}.
$$ 
Now consider an arbitrary rooted tree $\tau$ of degree $n$ with $\Cal{L}(\tau)=k$ leaves. It is associated with an ordered composition $n=i_1+\cdots+i_k$ as explained above. Using the fact that considering an ordered composition of $n$ finer than the first one is nothing but choosing an ordered composition of $i_s$ for any $s=1,\ldots,k$, we get:
\begin{eqnarray*}
	s_n(\tau)=\langle Z_{\tau},S_n(-1) \rangle 
			&=& \int_{0}^{-1}-\alpha^{k-1}
				\sum_{j_1=1}^{i_1}\cdots \sum_{j_k=1}^{i_k} 
	 				\alpha^{j_1- 1}\binom{i_1-1}{j_1-1} \cdots  \alpha^{j_k - 1}\binom{i_k-1}{j_k-1} \,d\alpha\\
					&=& -\int_{0}^{-1}\alpha^{k-1}	 (1+\alpha)^{i_1+\cdots+i_k - k}d\alpha\\
					&=& (-1)^{k-1}\int_{0}^{1} \alpha^{k-1}	 (1-\alpha)^{n - k}\,d\alpha\\
					&=& (-1)^{k-1}\beta(k,n-k+1)\\
					&=& (-1)^{k-1} \frac{(k-1)!(n-k)!}{n!}\\
					&=& \frac{(-1)^{\Cal{L}(\tau)-1}}{n\binom{n-1}{\Cal{L}(\tau)-1}}.
\end{eqnarray*}
In the last step we used that $k$ equals the number $\Cal{L}(\tau)$ of leaves of tree $\tau$.
\end{proof}

Using the inverse of Knuth's correspondence we end up with an equivalent formulation in the planar binary tree picture. Recall that a leaf of a planar binary tree is a \textsl{descent} if it is not the leftmost one and if it is pointing to the left \cite{Chap2}. The rotation correspondence yields a bijection between the left-pointing leaves of a planar binary tree $t$ and the leaves of its image tree $\Phi(t)=\tau$. The leftmost leaf of $t$ (i.e.~the one which, by definition, is not a descent) is mapped to the leftmost leaf of $\tau$. As an immediate consequence we have:

\begin{cor} \label{cor:dendMagnus}
In the planar binary tree picture, the element $\Omega' = \log^*(X)$ in $\widehat {\Cal A}$ is given by the formula:
\begin{equation}
\label{eq:dendMagnus}
	\Omega' = \sum_{n > 0} \frac{1}{n} \sum_{t \in \Cal{T}_{pl}^{bin} \atop |t|=n} \frac{(-1)^{d(t)}}{\binom{n-1}{d(t)}} t,
\end{equation}
where $d(t)$ denotes the number of descents of the planar binary  tree $t$, and $|t|$ its degree, i.e.~its number of internal vertices.
\end{cor}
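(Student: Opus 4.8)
The plan is to deduce the corollary from Theorem \ref{thm:dendMagnus} purely by transport of structure along Knuth's rotation correspondence $\Phi\colon T^{bin}_{pl}\to T_{pl}$, so that no fresh analysis of the logarithm is needed. The starting observation is that $\Phi$ is a grading-preserving bijection: from $\Phi(\vert)=\racine$ and $\Phi(t_1\vee t_2)=\Phi(t_1)\butcher\Phi(t_2)$ one sees that each new internal vertex of a planar binary tree is matched with exactly one new edge of its rooted image, whence $|t|=|\Phi(t)|$. Moreover the dendriform (hence associative) products correspond under $\Phi$, as recorded in Section \ref{sssect:DendAlgTree}; consequently the exponential, the logarithm and the group-like solution $X$ all intertwine with their binary-tree counterparts, so that the element $\Omega'=\log^*(X)$ appearing in the corollary is exactly the $\Phi^{-1}$-image of the element $\log^\star(X)$ computed in Theorem \ref{thm:dendMagnus}.

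Granting this, the proof reduces to a change of summation variable. First I would rewrite the rooted-tree formula of Theorem \ref{thm:dendMagnus} by reindexing each $\tau$ with $|\tau|=n$ as $\tau=\Phi(t)$, where $t\in\Cal T^{bin}_{pl}$ and $|t|=n$. The only remaining point is the arithmetic of the coefficient, which hinges on the identity
\begin{equation*}
	\Cal L(\Phi(t))=d(t)+1.
\end{equation*}
Once this is in hand, $(-1)^{\Cal L(\tau)-1}=(-1)^{d(t)}$ and $\binom{n-1}{\Cal L(\tau)-1}=\binom{n-1}{d(t)}$, and the rooted-tree sum turns termwise into the claimed binary-tree sum, completing the argument.

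The substantive step, and the one I expect to be the main obstacle, is establishing $\Cal L(\Phi(t))=d(t)+1$. I would prove the stronger statement that $\Cal L(\Phi(t))$ equals the number of left-pointing leaves of $t$, by induction on the number of internal vertices using $\Phi(t_1\vee t_2)=\Phi(t_1)\butcher\Phi(t_2)$. The key bookkeeping is that the left Butcher product $\Phi(t_1)\butcher\Phi(t_2)=B_+(\Phi(t_1)v_1\cdots v_q)$ grafts $\Phi(t_1)$ as a new leftmost branch: when $t_1=\vert$ this branch is the single vertex $\racine$, which becomes one new leaf, matching the single left-pointing leaf contributed by the left child $\vert$ in $t_1\vee t_2$; when $t_1\neq\vert$ it contributes $\Cal L(\Phi(t_1))$ leaves, matching $\mathrm{lp}(t_1)$ by induction. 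The right factor $t_2$ contributes its left-pointing leaves exactly when $t_2\neq\vert$ (for $t_2=\vert$ the right child is right-pointing and $\Phi(t_2)=\racine$ contributes nothing), and the two tallies agree term by term. A parallel induction shows that the leftmost leaf of $t$, which is automatically left-pointing and hence not a descent, maps to the leftmost leaf of $\Phi(t)$; removing it from the count gives $d(t)=\Cal L(\Phi(t))-1$. This is precisely the leaf/descent matching announced in the paragraph preceding the corollary, so the corollary then follows immediately.
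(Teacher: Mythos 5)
Your proposal is correct and follows the same route as the paper: the paper's proof is the one-line ``follows from the rotation correspondence,'' resting on the facts stated just before the corollary that $\Phi$ is grading-compatible, that it bijects the left-pointing leaves of $t$ with the leaves of $\Phi(t)$, and that the leftmost leaf goes to the leftmost leaf, whence $d(t)=\Cal L(\Phi(t))-1$. Your inductive verification of the leaf/descent matching (including the edge cases $t_1=\vert$ and $t_2=\vert$ in $\Phi(t_1\vee t_2)=\Phi(t_1)\butcher\Phi(t_2)$) simply supplies the details the paper leaves implicit.
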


\begin{proof}
Follows from the rotation correspondence between planar binary trees and planar rooted trees. 
\end{proof}

Note that the extra sign $(-1)^{n-1}$ in Chapoton's formula \cite{Chap2} could be retrieved starting with the solution $Y=Y(a)$ of the equation $Y=\un+Y\succ a$ instead of $X$. Both are linked through the antipode provided the sign of the generator is changed, namely $Y(a)=S\big(X(-a)\big).$

\begin{cor} \label{cor2:dendMagnus}
For any complete filtered dendriform algebra $\Cal A=\Cal A_1\supset \Cal A_2\supset \Cal A_3\supset\cdots$ and for any $a\in \Cal A$, the element $\Omega'(a) = \log^*\big(X(a)\big)$ in $\Cal A$, where $X(a)$ is the solution of the linear dendriform equation $X(a)=\un+a\prec X(a)$, is given by the formula:
\begin{equation}
\label{eq:dendMagnus}
	\Omega' = \sum_{n > 0} \frac{1}{n} \sum_{t \in \Cal{T}_{pl}^{bin} \atop |t|=n} \frac{(-1)^{d(t)}}{\binom{n-1}{d(t)}} F_a(t),
\end{equation}
where $F_a: {{\Cal T}'}_{pl}^{bin}\to \Cal A$ is the unique dendriform algebra morphism defined in (\ref{def:dendmap}), such that $F_a(\raise 3pt\hbox{$\scalebox{0.8}\tree$})=a$.
\end{cor}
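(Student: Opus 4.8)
The statement is a functoriality consequence of Corollary \ref{cor:dendMagnus}, obtained by transporting the free formula to $\Cal A$ through the universal dendriform morphism $F_a$. The plan is to read Corollary \ref{cor:dendMagnus} as an identity in the completion of the \emph{free} unital dendriform algebra on one generator (spanned by planar binary trees), and then to apply $F_a$ termwise. Recall from \eqref{def:dendmap} that $F_a:{\Cal T'}^{bin}_{pl}\to\Cal A$ is the unique dendriform algebra morphism sending the generator to $a$; it is extended to the unital algebras by $F_a(\un)=\un$.

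First I would check that $F_a$ is compatible with the gradings, hence extends to a continuous morphism between the completions. Since $\Cal A=\Cal A_1$ and the products satisfy $\Cal A_p\prec\Cal A_q\subset\Cal A_{p+q}$ and $\Cal A_p\succ\Cal A_q\subset\Cal A_{p+q}$, and since by \eqref{def:dendmap} the image $F_a(t)=F_a(t_1)\succ a\prec F_a(t_2)$ of a binary tree $t$ of degree $n$ is an iterated $\succ/\prec$ product of $n$ copies of $a\in\Cal A_1$, an easy induction on $n$ gives $F_a(t)\in\Cal A_n$ whenever $|t|=n$. Consequently $F_a$ sends the degree-$n$ homogeneous component into $\Cal A_n$, extends uniquely to a continuous dendriform morphism on the completion, and commutes with convergent series.

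Next I would identify the images of the two relevant elements. Applying $F_a$ to the free fixed-point equation $X=\un+a\prec X$ of \eqref{eq:dendri-linear} (with $a$ the generator) and using that $F_a$ commutes with $\prec$ and fixes $\un$, one gets $F_a(X)=\un+a\prec F_a(X)$ with $a\in\Cal A$ now; by uniqueness of the solution of a linear dendriform equation in the complete filtered algebra $\Cal A$ this forces $F_a(X)=X(a)$. Moreover $F_a$ is an algebra morphism for the associative product $*=\prec+\succ$, so it commutes with the $*$-power series defining $\log^*$; hence $F_a\big(\log^*(X)\big)=\log^*\big(F_a(X)\big)=\log^*\big(X(a)\big)=\Omega'(a)$.

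Finally I would apply $F_a$ to the closed formula of Corollary \ref{cor:dendMagnus} and use continuity to interchange $F_a$ with the double sum, obtaining $\Omega'(a)=\sum_{n>0}\frac1n\sum_{|t|=n}\frac{(-1)^{d(t)}}{\binom{n-1}{d(t)}}F_a(t)$, which is exactly the asserted formula. The only point demanding genuine care --- and thus the main obstacle --- is the continuity and convergence bookkeeping: verifying the grading compatibility that lets $F_a$ pass to the completions and justifies exchanging it with the infinite sums defining $X(a)$ and $\log^*$. Everything else is the formal naturality of the constructions.
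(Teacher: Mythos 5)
Your proposal is correct and follows exactly the route the paper intends: the paper states Corollary \ref{cor2:dendMagnus} without a written proof, treating it as the immediate transport of Corollary \ref{cor:dendMagnus} from the (completed) free dendriform algebra to $\Cal A$ via the universal morphism $F_a$, which is precisely what you carry out. Your explicit verification that $F_a$ respects the filtration, extends continuously to the completion, sends the free solution $X$ to $X(a)$ by uniqueness of the fixed point, and commutes with $\log^*$ supplies the bookkeeping the paper leaves implicit.
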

Hence, using the underlying dendriform algebra structure rather than the pre-Lie one, this result gives a closed formula for the Magnus expansion in Theorem \ref{thm:pLMagnus} in Paragraph \ref{ssect:prelieMagnus}.\\


\section{A formula by Mielnik--Pleba\'nski, and Strichartz}
\label{sect:PleMieSti}

We would like to give a more precise formulation of Theorem \ref{thm:dendMagnus}, when the dendriform algebra $\Cal A$ is an algebra of matrix-valued functions together with the Riemann integral as a Rota--Baxter operator (of weight $\theta=0$). In the next section we follow \cite{BrMePa,LR}. We will see that the continuous Baker--Campbell--Hausdorff series, also known as Mielnik--Pleba\'nski--Strichartz formula, gives a closed expression for the classical Magnus expansion due to deep structural reasons on the symmetric group algebra, i.e.~a natural underlying dendriform algebra, which matches with the dendriform algebra structure on planar binary trees.


\subsection{A dendriform structure on permutations}
\label{ssect:permdend}

$S_n$ is the group of permutations of $n$ elements, and $k[S_n]$ its group algebra. The concatenation of two permutations $\sigma \in S_n$ and $\tau \in S_m$ is the permutation $\sigma\times\tau \in S_{n+m}$ obtained by letting $\sigma$ act on the first $n$ elements and letting $\tau$ act on the $m$ last elements. An associative product is given by:
\begin{equation*}
	\sigma*\tau = \sum_{\omega \in \smop{Sh}_{n,m}}\omega\circ({\sigma\times\tau}).
\end{equation*}
$\mop{Sh}_{n,m}\subset S_{n+m}$ stands for the set of $(n,m)$ shuffles. We denote the graded connected algebra with this product by $\Cal H:=\bigoplus_{n\ge 0}\Cal H_n$,  where $\Cal H_n=k[S_n]$.

The crucial observation is that the product $*$ on permutations splits into $*=\prec+\succ$, where $\prec$ and $\succ$ are defined by:
\begin{equation*}
	\sigma\prec\tau=\sum_{\omega\in\smop {Sh}^2_{n,m}}\omega\circ({\sigma\times\tau}),
	\hskip 8mm
	\sigma\succ\tau=\sum_{\omega\in\smop {Sh}^1_{n,m}}\omega\circ({\sigma\times\tau}),
\end{equation*}
where $\mop{Sh}^1_{n,m}$, respectively $\mop{Sh}^2_{n,m}$ stands for the shuffles $\omega$ such that $\omega(n+m)=n+m$, resp.~$\omega(n)=n+m$. It is shown in \cite{LR2} that $\prec$ and $\succ$ endow the augmentation ideal $\Cal H':=\bigoplus_{n\ge 1}\Cal H_n$ with the structure of a dendriform algebra. 

\begin{remark}{\rm{
The (dendriform) algebra just described represents only half of the picture. Completing the structure amounts to what is known as Malvenuto--Reutenauer Hopf algebra \cite{MR}, which is a graded connected Hopf algebra on $\Cal H$. We refrain from giving more details and refer the reader to the references \cite{BrMePa,DHT,LR}. However, let us add that the coproduct splits also into two parts, thus endowing the Hopf algebra $\Cal H$ with the much richer structure of \textsl{bidendriform Hopf algebra}, see Foissy's work \cite{Foissy}.}}\\
\end{remark}


\subsection{Planar binary trees and permutations}
\label{ssect:pbtPerm}

The material presented in this paragraph is mostly borrowed from \cite{LR}. See also \cite{BrMePa} for a very detailed similar description. A bijective correspondence between permutations and \textsl{planar binary trees with levels} is well-known in combinatorics \cite{Han81}. A planar binary tree with levels is a planar binary tree $t$ with, say, $n$ internal vertices together with a bijective decreasing map $\varphi$ from the poset of its internal vertices into $\{1,\ldots,n\}$. Such a tree admits a graphical realization by drawing the internal vertices at the prescribed levels, with level $1$ being the top one and level $n$ being the deepest one. Any planar binary tree with levels $(t,\varphi)$ gives rise to two such trees $(t_1,\varphi_1)$ and $(t_2,\varphi_2)$, where $t=t_1\vee t_2$ and $\varphi_i$ is the "standardized" restriction of the injection $\varphi$ to the internal vertices of $t_i$, namely its composition on the left with the unique increasing bijection from its image onto $\{1,\ldots,|t_i|\}$, $i=1, 2$.

To any such tree $(t,\varphi)$ we can associate a permutation $\sigma_{t,\varphi}$ as follows: $\sigma_{t,\varphi}(i)$ is the level of the internal vertex $u_i$ situated between leaves $l_i$ and $l_{i+1}$ (the leftmost being the first and the rightmost being number $n+1$).  This correspondence $P$ is a bijection, the inverse of which is recursively given as follows: the permutation $\sigma\in S_n$ gives rise to two sequences of integers: the sequence before n and the sequence after n in $(\sigma_1,\ldots,\sigma_n)$. One of them may be empty. By ``standardizing" the integers in each sequence, they form a permutation. For instance $(341625)$ gives the two sequences $(341)$ and $(25)$, which, after standardizing, give $(231)$ and $(12)$. By induction these two permutations give rise to two trees with levels. The grafting $\vee$ of the two trees (in the order given above) gives the underlying tree of $P^{-1}(\sigma)$, and the original permutation is used to determine the levels of each vertex, namely $\varphi(u_j)=\sigma^{-1}(j)$.

Recall that the descent set of a permutation $\sigma\in S_n$ is the subset $D(\sigma)\subset\{1,\ldots,n-1\}$ of indices $i$ such that $\sigma(i)>\sigma(i+1)$. We denote by $d(\sigma)$ the cardinality of the descent set. The following lemma, the proof of which is simple, establishes a link with descents of planar binary trees:

\begin{lem}\label{lem:descents}
For any planar binary tree with levels $(t,\varphi)$, the correspondence
\begin{equation*}
	j\mapsto l_{j+1}:\{1,\ldots,n\}\to\{\mop{leaves of t}\}
\end{equation*}
restricts to a bijection from the descent set $D(\sigma_{t,\varphi})$ onto the descent set of $t$.
\end{lem}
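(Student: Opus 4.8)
The plan is to identify, for each index $j$, the internal vertex $u_j$ sitting between the consecutive leaves $l_j$ and $l_{j+1}$ as the lowest common ancestor of these two leaves, so that by definition $\sigma_{t,\varphi}(j)=\varphi(u_j)$ is its level. First I would record that the map $j\mapsto l_{j+1}$ is injective and sends $\{1,\ldots,n\}$ bijectively onto the set $\{l_2,\ldots,l_{n+1}\}$ of non-leftmost leaves; since a descent leaf is by definition a non-leftmost leaf which is a left child, it then suffices to prove, for $1\le j\le n-1$, the equivalence $\big(l_{j+1}\text{ is a left child}\big)\Leftrightarrow\big(\sigma_{t,\varphi}(j)>\sigma_{t,\varphi}(j+1)\big)$, and to check separately that the excluded leaf $l_{n+1}$ is never a descent.

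Next I would fix the comparison of levels in terms of the tree order. Because the partial order on $V(t)$ has the root as its minimum and $\varphi$ is a decreasing bijection (with level $1$ at the top and level $n$ the deepest), any strict ancestor $w$ of an internal vertex $w'$ satisfies $\varphi(w)>\varphi(w')$. This single monotonicity statement is what converts a relation of ancestry between the vertices $u_j$ and $u_{j+1}$ into the required inequality between $\sigma_{t,\varphi}(j)$ and $\sigma_{t,\varphi}(j+1)$.

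The core step is then a case analysis on the parent $p$ of $l_{j+1}$. If $l_{j+1}$ is the left child of $p$, then the left subtree of $p$ is the single leaf $l_{j+1}$, so $l_{j+2}$ is the leftmost leaf of the right subtree of $p$ and $p$ is the lowest common ancestor of $l_{j+1},l_{j+2}$, i.e.\ $p=u_{j+1}$; meanwhile $l_j$ lies outside the subtree rooted at $p$, so $u_j$ is a strict ancestor of $p$, whence $\sigma_{t,\varphi}(j)=\varphi(u_j)>\varphi(u_{j+1})=\sigma_{t,\varphi}(j+1)$ and $j$ is a descent. Symmetrically, if $l_{j+1}$ is the right child of $p$ then $p=u_j$ and $u_{j+1}$ is a strict ancestor of $p$, giving $\sigma_{t,\varphi}(j)<\sigma_{t,\varphi}(j+1)$, so $j$ is not a descent. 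Finally, $l_{n+1}$ lies on the rightmost branch of $t$ and is therefore the right child of its parent, hence never a descent leaf, matching the fact that $D(\sigma_{t,\varphi})\subseteq\{1,\ldots,n-1\}$. Combining these observations, the restriction of $j\mapsto l_{j+1}$ to $D(\sigma_{t,\varphi})$ is a bijection onto the descent set of $t$. The main obstacle is purely bookkeeping: correctly matching the two ``between-leaves'' vertices $u_j,u_{j+1}$ to the parent of $l_{j+1}$ and identifying which of them is its strict ancestor; once the lowest-common-ancestor description of the $u_i$ and the monotonicity of $\varphi$ are in place, the equivalence follows immediately.
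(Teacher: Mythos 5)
Your argument is correct. Note that the paper itself offers no proof of this lemma (it is introduced with the phrase ``the proof of which is simple''), so there is nothing to compare against; your write-up supplies exactly the kind of elementary argument the authors presumably had in mind. The two key identifications --- that the vertex $u_j$ ``between'' $l_j$ and $l_{j+1}$ is the point where their paths to the root diverge, and that $u_{j+1}=p$ (resp.\ $u_j=p$) with the other vertex a strict ancestor of $p$ according as $l_{j+1}$ is a left (resp.\ right) child of its parent $p$ --- are stated and used correctly, and the monotonicity of $\varphi$ together with the separate treatment of the rightmost leaf $l_{n+1}$ closes the argument.
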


Forgetting the levels, the bijection $P^{-1}$ provides a surjective map $\psi_n: S_n\to\left(T_{pl}^{bin}\right)_n$ for any $n\ge 0$. As a corollary of Lemma \ref{lem:descents}, we have:
\begin{equation}
\label{eq:descent}
	d(\sigma)=d\big(\psi(\sigma)\big)
\end{equation}
for any $\sigma\in S_n$. Dually, we get a linear injection $\psi_n^*:\left(\Cal T_{pl}^{bin}\right)_n\to k[S_n]$ given by:
\begin{equation*}
	\psi_n^*(t)=\sum_{\psi_n(\sigma)=t} \sigma.
\end{equation*}
These maps together give a degree zero linear injection $\psi^*$ from $\Cal T_{pl}^{bin}$ to the dendriform algebra $\Cal H$. The following theorem is due to Loday and Ronco (\cite[Theorem 3.1]{LR}, \cite[Proposition 5.3]{LR2}):

\begin{thm}\cite{LR2}\label{thm:dendmr}
The linear injection $\psi^*$ defined above is a unital dendriform algebra morphism.
\end{thm}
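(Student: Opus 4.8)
\emph{The plan} is to prove the two identities $\psi^*(s\prec t)=\psi^*(s)\prec\psi^*(t)$ and $\psi^*(s\succ t)=\psi^*(s)\succ\psi^*(t)$ for all $s,t\in\Cal T_{pl}^{bin}$, together with $\psi^*(\vert)=\un$; these three assertions are precisely the statement that $\psi^*$ is a unital dendriform morphism. The natural induction variable is the total degree $|s|+|t|$, and the two basic tools are, on the tree side, the recursive formulas $s\prec t=s_1\vee(s_2*t)$ and $s\succ t=(s*t_1)\vee t_2$ for $s=s_1\vee s_2$ and $t=t_1\vee t_2$, and, on the permutation side, the description of $\psi^*$ through the bijection $P$ between planar binary trees with levels and permutations: unwinding the definition $\psi_n^*(t)=\sum_{\psi_n(\sigma)=t}\sigma$ gives $\psi_n^*(t)=\sum_{\varphi}\sigma_{t,\varphi}$, the sum running over all admissible level functions $\varphi$ on $t$ (equivalently, over the fibre of $\psi_n$ above $t$).

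\emph{The heart of the matter} is a single combinatorial lemma describing how level functions behave under the outermost grafting, and how this matches the constrained shuffles defining $\prec$ and $\succ$ on $\Cal H'$. Concretely, fix $s$ of degree $n$ and $t$ of degree $m$. Expanding the right-hand side by bilinearity,
\[
	\psi^*(s)\prec\psi^*(t)=\sum_{\varphi,\psi}\ \sum_{\omega\in\mop{Sh}^2_{n,m}}\omega\circ\big(\sigma_{s,\varphi}\times\sigma_{t,\psi}\big),
\]
I would show that each summand $\omega\circ(\sigma_{s,\varphi}\times\sigma_{t,\psi})$ equals $\sigma_{r,\chi}$ for a unique planar binary tree with levels $(r,\chi)$, that the underlying tree $r$ occurs in the tree expression $s\prec t$ produced by the recursion, and that as $(\varphi,\psi,\omega)$ range over all level functions of $s$ and $t$ and all shuffles in $\mop{Sh}^2_{n,m}$, the pairs $(r,\chi)$ range bijectively over all level functions of all trees occurring in $s\prec t$. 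The key geometric fact is that a level function on a grafted tree is the datum of a level function on each branch together with an interleaving of their levels, in which the newly created outermost vertex receives the deepest level; the shuffle $\omega$ records exactly this interleaving, while the constraint $\omega(n)=n+m$ (resp.\ $\omega(n+m)=n+m$) records whether that outermost vertex is inherited from the left factor $s$ --- forcing the left grafting $s_1\vee(\,\cdot\,)$ and hence $\prec$ --- or from the right factor $t$ --- forcing the pattern $(\,\cdot\,)\vee t_2$ and hence $\succ$. The inner products $s_2*t$ and $s*t_1$ are then handled by the induction hypothesis, so that the tree recursion matches the splitting $*=\prec+\succ$ on permutations.

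\emph{The main obstacle} is the precise bookkeeping in this dictionary, on two points. First, the definition of $\sigma_{t,\varphi}$ and of $P$ involves standardization of the level functions of the two branches, so one must check that re-standardizing after shuffling reproduces exactly the assembled level function $\chi$ on $r$, with no collision and no omission; this is where the fact that the tree products $\prec,\succ$ are multiplicity-free (all coefficients $0$ or $1$) must be matched with the fact that each permutation appears with total coefficient exactly one on the right-hand side, consistent with every permutation lying in a unique tree fibre. Second, one must verify that the two position constraints cutting $\mop{Sh}_{n,m}$ into $\mop{Sh}^1_{n,m}$ and $\mop{Sh}^2_{n,m}$ are in exact correspondence with the two recursive terms defining $\succ$ and $\prec$, with the correct left/right assignment; getting this assignment right (rather than swapped) is the delicate point, and it is pinned down by tracking, under $P$, the leaf between which the outermost vertex sits.

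\emph{Finally}, unitality is the base case: the tree $\vert$ has degree $0$, its fibre is the empty permutation, and the augmentation rules make $\psi^*(\vert)=\un$ neutral for $*$, $\prec$ and $\succ$; once both identities are established the induction closes. Alternatively, and more economically, one may invoke the universal property \eqref{def:dendmap}: letting $a_0\in S_1$ be the unique permutation of one letter, there is a unique dendriform morphism $F_{a_0}\colon{\Cal T'}^{bin}_{pl}\to\Cal H'$ with $F_{a_0}(\vert\vee\vert)=a_0$, and since $\psi^*(\vert\vee\vert)=a_0$ as well, it suffices to prove the single grafting identity $\psi^*(t_1\vee t_2)=\psi^*(t_1)\succ a_0\prec\psi^*(t_2)$, using $t_1\vee t_2=t_1\succ(\vert\vee\vert)\prec t_2$ from \eqref{magdend}; then $\psi^*=F_{a_0}$ by induction on $|t|$ and $\psi^*$ inherits the morphism property from $F_{a_0}$. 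Either way the combinatorial lemma of the second paragraph is the essential input.
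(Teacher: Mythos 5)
The paper does not actually prove this statement: it is imported wholesale from Loday and Ronco (\cite[Theorem 3.1]{LR}, \cite[Proposition 5.3]{LR2}), so there is no in-paper argument to measure yours against, and your attempt is doing strictly more than the text does. On its own terms your outline is a correct reconstruction of the standard Loday--Ronco argument, and the delicate signpost you worry about is placed correctly: for $\omega\in\mop{Sh}^2_{n,m}$ the condition $\omega(n)=n+m$ puts the maximal letter of $\omega\circ(\sigma_{s,\varphi}\times\sigma_{t,\psi})$ at the position where $\sigma_{s,\varphi}$ attains its maximum, so under $P^{-1}$ the root of the resulting tree is the grafting vertex of $s=s_1\vee s_2$ and the left branch standardizes back to $(s_1,\varphi_1)$, matching $s\prec t=s_1\vee(s_2*t)$; dually for $\mop{Sh}^1_{n,m}$ and $\succ$. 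Two remarks. First, your two routes are not genuinely distinct: the induction of your opening paragraph already requires, at the step $\psi^*\bigl(s_1\vee(s_2*t)\bigr)$, the grafting identity $\psi^*(u\vee v)=\psi^*(u)\succ a_0\prec\psi^*(v)$ together with an induction hypothesis covering $*$ as well as $\prec$ and $\succ$; so you should commit to your second organization, proving that single identity and then invoking freeness of ${\Cal T'}^{bin}_{pl}$ via \eqref{magdend} and \eqref{def:dendmap}. Second, what you call the combinatorial lemma --- that a level function on $u\vee v$ is the same datum as a level function on $u$, one on $v$, and a shuffle of $\{1,\ldots,|u|+|v|\}$ reserving the deepest level $|u|+|v|+1$ for the new root --- is true (the root is the minimum of the vertex poset, hence receives the maximal value under a decreasing bijection, and standardizing the restrictions recovers $\varphi_1,\varphi_2$), but it carries the entire weight of the theorem and is only asserted; as written your text is an accurate plan rather than a proof.
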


\begin{rmk}{\rm{
The map $\psi^*$ is even a bidendriform Hopf algebra morphism \cite{Foissy}.}}\\
\end{rmk}


\subsection{The Mielnik--Pleba\'nski--Strichartz formula}
\label{ssect:MPSformula}

Let $\Cal A$ be an algebra of locally integrable functions of one real variable, with values in some topological not necessarily commutative algebra, for example $\Cal M_N(\CC)$, $N\in \mathbb{N}$. The weight zero Rota-Baxter map $R:\Cal A\to \Cal A$ given by $Ra(s):=\int_0^s a(u)du$ endows $\Cal A$ with a dendriform algebra structure, as explained in Paragraph \ref{ssect:DendAlg} above. Fix some $a$ in $\Cal A$, suppose $s>0$, and define $\wt F_\sigma(a)\in \Cal A$ for any permutation $\sigma\in S_n,\,n\ge 1$ as follows:
\begin{equation*}
	\wt F_a(\sigma)(s):=\frac{d}{ds}\idotsint\limits_{0<u_n<\cdots <u_1<s}a(u_{\sigma_1})\cdots a(u_{\sigma_n})\,du_1\cdots du_n.
\end{equation*}
In particular, $\wt F_a\big((1)\big)=a$, $\wt F_a\big((12)\big)=aR(a)$, $\wt F_a\big((21)\big)=R(a)a$, $\wt F_a\big((123)\big)=aR(aR(a))$ and $\wt F_a\big((321)\big)=R(R(a)a)a$. Observe that there are permutations $\sigma \in S_n$, $n>2$, which, strictly speaking, can not be written as iterated operators, e.g.~$(231)$ and $(132)$. However, they can be written as iterated integrals in the above sense. Therefore, the following only applies to the special case of the dendriform algebra structure defined in terms of $Ra(s):=\int_0^s a(u)du$ on $\Cal A$. The crucial point is to transfer the dendriform calculations to the simplicies forming the integration domains. The correspondence $\wt F_a$ obviously extends uniquely to a linear map from the augmentation ideal of $\Cal H$ to $\Cal A$.\\

Before stating the theorem, let us introduce some notations: for any positive integer $N$ we denote by $\Delta_N^s\subset [0,s]^N$ the simplex $\{u=(u_1,\ldots,u_N),\,0<u_N<\cdots <u_1<s\}$. The symmetric group $S_N$ acts on $[0,s]^N$ by permutation of the variables, namely:
\begin{equation*}
	\sigma.(u_1,\ldots, u_N):=(u_{\sigma_1^{-1}},\ldots,u_{\sigma_N^{-1}}).
\end{equation*}
We have then:
\begin{equation*}
	\sigma.\Delta_N^s=\{u=(u_1,\ldots,u_N),\,0<u_{\sigma_N}<\cdots <u_{\sigma_1}<s\}.
\end{equation*}

\begin{thm}
\label{thm:main}
$\wt F_a$ is a dendriform algebra morphism from the augmentation ideal of $\Cal H$ to $\Cal A$, linked to the dendriform algebra morphism $F_a$ of Corollary \ref{cor2:dendMagnus} by:
\begin{equation*}
	F_a=\wt F_a\circ\psi^*.
\end{equation*}
\end{thm}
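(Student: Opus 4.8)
The statement packages two assertions: that $\wt F_a$ respects the dendriform operations, and that it factors $F_a$ through $\psi^*$. The plan is to prove the morphism property by a direct geometric analysis of the defining iterated integrals, and then to deduce the factorisation for free from the universal property of $\Cal T_{pl}^{bin}$. The key preliminary observation is a dictionary between $\wt F_a$ and the integration domains. For $\sigma\in S_n$ write
\begin{equation*}
	G_\sigma(s):=\idotsint\limits_{\Delta_n^s} a(u_{\sigma_1})\cdots a(u_{\sigma_n})\,du_1\cdots du_n,
\end{equation*}
so that by definition $\wt F_a(\sigma)=\dot G_\sigma$ and, since $G_\sigma(0)=0$, also $R\big(\wt F_a(\sigma)\big)=G_\sigma$. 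Using $f\prec g=fR(g)$ and $f\succ g=R(f)g$, this gives at once $\wt F_a(\sigma)\prec\wt F_a(\tau)=\dot G_\sigma\,G_\tau$ and $\wt F_a(\sigma)\succ\wt F_a(\tau)=G_\sigma\,\dot G_\tau$, which is exactly what the morphism property must reproduce on the permutation side.

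Next I would establish the shuffle identity $G_\sigma G_\tau=\sum_{\omega\in\smop{Sh}_{n,m}}G_{\omega\circ(\sigma\times\tau)}$ for $\sigma\in S_n$, $\tau\in S_m$. Geometrically $G_\sigma(s)G_\tau(s)$ is a single integral over $\Delta_n^s\times\Delta_m^s\subset[0,s]^{n+m}$, carrying the fixed operator word $a(u_{\sigma_1})\cdots a(u_{\sigma_n})\,a(v_{\tau_1})\cdots a(v_{\tau_m})$. Off a set of measure zero the $n+m$ variables are totally ordered, and each total order refining both $u_1>\cdots>u_n$ and $v_1>\cdots>v_m$ corresponds to exactly one $(n,m)$-shuffle $\omega$ recording the interleaving of ranks; on that region the operator word together with the induced time order is precisely the integrand of $G_{\omega\circ(\sigma\times\tau)}$. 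Summing over regions yields the identity, and differentiating it with the product rule already gives the associative statement $\wt F_a(\sigma*\tau)=\wt F_a(\sigma)*\wt F_a(\tau)$.

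The heart of the argument is the dendriform refinement. Differentiating the shuffle identity splits the right-hand side as $\dot G_\sigma G_\tau+G_\sigma\dot G_\tau$. Since $\frac{d}{ds}$ fixes the outermost (largest) time variable to the value $s$, the shuffle regions in which this extremal variable is supplied by $\Delta_n^s$ assemble exactly into $\dot G_\sigma G_\tau$, while those in which it comes from $\Delta_m^s$ assemble into $G_\sigma\dot G_\tau$. Identifying these two complementary families of shuffles with the sets $\smop{Sh}^2_{n,m}$ and $\smop{Sh}^1_{n,m}$ that define $\prec$ and $\succ$ on $\Cal H$, one reads off $\wt F_a(\sigma\prec\tau)=\wt F_a(\sigma)\prec\wt F_a(\tau)$ and $\wt F_a(\sigma\succ\tau)=\wt F_a(\sigma)\succ\wt F_a(\tau)$, establishing that $\wt F_a$ is a dendriform morphism. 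I expect the only real subtlety to lie in this last bookkeeping step: matching the geometric splitting by the block of origin of the extremal variable to the combinatorial conditions defining $\smop{Sh}^1$ and $\smop{Sh}^2$ requires keeping the orientation of the simplex, the ordering convention in $\omega\circ(\sigma\times\tau)$, and the left/right roles of $\prec$ and $\succ$ mutually consistent, and it is easy to slip between the maximal and minimal variable here.

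The factorisation is then immediate. By Theorem~\ref{thm:dendmr} the map $\psi^*$ is a unital dendriform morphism, and by the previous steps so is $\wt F_a$; hence $\wt F_a\circ\psi^*$ is a unital dendriform morphism $\Cal T_{pl}^{bin}\to\Cal A$. Since $\Cal T_{pl}^{bin}$ is the free dendriform algebra on one generator and $F_a$ is the unique morphism with $F_a(\raise 3pt\hbox{$\scalebox{0.8}\tree$})=a$, it suffices to compare the two morphisms on that generator: $\psi^*$ sends it to the permutation $(1)\in S_1$, and $\wt F_a\big((1)\big)=a$. Thus $\wt F_a\circ\psi^*$ and $F_a$ agree on the generator, and by the universal property they coincide.
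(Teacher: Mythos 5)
Your proof is correct and follows essentially the same route as the paper's: the decomposition of the product of simplices into the shuffle regions $\omega^{-1}.\Delta_{n+m}$, with the sub-unions indexed by $\mop{Sh}^2_{n,m}$ and $\mop{Sh}^1_{n,m}$ identified respectively with the regions where the largest time variable comes from the first or the second block, is exactly the computation carried out in the paper (there written term by term as $R\big(\wt F_a(\sigma\prec\tau)\big)=R\big(\wt F_a(\sigma)\prec\wt F_a(\tau)\big)$ rather than by differentiating the full Chen shuffle identity and then refining). The deduction of $F_a=\wt F_a\circ\psi^*$ from Theorem \ref{thm:dendmr}, the freeness of the dendriform algebra of planar binary trees, and agreement on the generator is likewise the paper's argument, which you merely spell out in slightly more detail.
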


\begin{proof}
By direct computation: take $\sigma\in S_n$ and $\tau\in S_m$. Then,
\allowdisplaybreaks{
\begin{eqnarray*}
	R\big(\wt F_a(\sigma\prec\tau)\big)(s)
	&=&\sum_{\omega\in\smop{Sh}^2(n,m)}R\big(\wt F_a(\omega\circ(\sigma\times\tau)\big)(s)\\
	&=&\sum_{\omega\in\smop{Sh}^2(n,m)}\hskip 5mm
			\idotsint_{0<u_{\omega^{-1}_{n+m}}<\cdots <u_{\omega^{-1}_1}<s}
				a(u_{\sigma_1})\cdots a(u_{\sigma_n})a(u_{n+\tau_1})\cdots a(u_{n+\tau_m})\,du\\
	&=&\sum_{\omega\in\smop{Sh}^2(n,m)}\hskip 5mm
			\idotsint_{\omega^{-1}.\Delta_{n+m}}a(u_{\sigma_1})\cdots a(u_{\sigma_n})a(u_{n+\tau_1})\cdots a(u_{n+\tau_m})\,du\\
	&=&\idotsint_{\textstyle\bigcup\limits_{\omega\in\smop{Sh}^2(n,m)}\!\!\!\scriptstyle\omega^{-1}.\Delta_{n+m}}
				a(u_{\sigma_1})\cdots a(u_{\sigma_n})a(u_{n+\tau_1})\cdots a(u_{n+\tau_m})\,du\\
	&=&\idotsint_{{\scriptstyle 0<u_n<\cdots<u_1<s\atop\scriptstyle  0<v_m<\cdots <v_1<s},\,v_1<u_1}
				a(u_{\sigma_1})\cdots a(u_{\sigma_n})a(v_{\tau_1})\cdots a(v_{\tau_m})\,dudv\\
	&=&R\Big(\wt F_a(\sigma)R\big(\wt F_a(\tau)\big)\Big)\\
	&=&R\big(\wt F_a(\sigma)\prec \wt F_a(\tau)\big),
\end{eqnarray*}}
hence $\wt F_a(\sigma\prec\tau)=\wt F_a(\sigma)\prec \wt F_a(\tau)$. The corresponding computation for $\succ$ is entirely similar and left to the reader, using $\mop{Sh}^1(n,m)$ instead of $\mop{Sh}^2(n,m)$. The second statement is an immediate consequence of Theorem \ref{thm:dendmr} and the freeness of the dendriform algebra of planar binary trees.
\end{proof}

\begin{cor}\label{cor:mps}
The element $\Omega'(a) = \log^*\big(X(a)\big)$ in $\Cal A$, where $X(a)$ is the solution of the linear dendriform equation $X(a)=\un+a\prec X(a)$, is formally given by the series:
\begin{equation}
\label{eq:mps1}
	\Omega' = \sum_{n > 0} \frac{1}{n} \sum_{\sigma\in S_n} \frac{(-1)^{d(\sigma)}}{\binom{n-1}{d(\sigma)}} \wt F_a(\sigma).
\end{equation}
\end{cor}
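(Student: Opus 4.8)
The plan is to obtain \eqref{eq:mps1} directly from the closed formula of Corollary~\ref{cor2:dendMagnus} by pushing it forward along the morphism identity of Theorem~\ref{thm:main}; everything is understood degree by degree (i.e.\ formally), so that the linear maps involved may be applied term-wise. Applied to the dendriform algebra $\Cal A$ at hand, Corollary~\ref{cor2:dendMagnus} reads
\begin{equation*}
	\Omega' = \sum_{n > 0} \frac{1}{n} \sum_{t \in \Cal{T}_{pl}^{bin},\ |t|=n} \frac{(-1)^{d(t)}}{\binom{n-1}{d(t)}}\, F_a(t).
\end{equation*}
The first step is to rewrite each $F_a(t)$ by means of the factorization $F_a = \wt F_a \circ \psi^*$ supplied by Theorem~\ref{thm:main}, together with the explicit expression $\psi_n^*(t) = \sum_{\psi_n(\sigma)=t}\sigma$. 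This yields $F_a(t) = \sum_{\psi_n(\sigma)=t}\wt F_a(\sigma)$, distributing the scalar $(-1)^{d(t)}/\binom{n-1}{d(t)}$ over the fiber $\psi_n^{-1}(t)$.

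The second step is a reindexing of the double sum thus produced. Since $\psi_n\colon S_n\to\left(T_{pl}^{bin}\right)_n$ is surjective, its fibers $\psi_n^{-1}(t)$ partition $S_n$ as $t$ ranges over the planar binary trees of degree $n$; hence summing first over $t$ and then over the $\sigma$ lying above $t$ is the same as a single sum over all $\sigma\in S_n$, with $t=\psi_n(\sigma)$. Finally I would invoke the descent identity \eqref{eq:descent}, namely $d(\sigma)=d\big(\psi_n(\sigma)\big)$, to replace $d(t)$ by $d(\sigma)$ in both the sign and the binomial coefficient. Collecting these substitutions gives precisely \eqref{eq:mps1}.

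No step here is genuinely difficult: the corollary is a bookkeeping consequence of three facts already established---the tree formula of Corollary~\ref{cor2:dendMagnus}, the relation $F_a=\wt F_a\circ\psi^*$ of Theorem~\ref{thm:main}, and the compatibility \eqref{eq:descent} of descents under $\psi$. The one point that deserves a moment's attention is that the coefficient attached to a permutation $\sigma$ depends on $\sigma$ only through its image tree $\psi_n(\sigma)$; it is exactly \eqref{eq:descent} that guarantees this, and hence that the passage from a sum indexed by trees to a sum indexed by permutations carries the coefficients over unchanged.
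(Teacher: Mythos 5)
Your proposal is correct and follows exactly the paper's own argument, which cites the same three ingredients (Corollary \ref{cor2:dendMagnus}, the factorization $F_a=\wt F_a\circ\psi^*$ from Theorem \ref{thm:main}, and the descent compatibility \eqref{eq:descent}) and declares the result an immediate consequence. You have merely spelled out the fiberwise reindexing that the paper leaves implicit, and you correctly identify \eqref{eq:descent} as the point ensuring the coefficients transfer unchanged.
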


\begin{proof}
This is an immediate consequence of Corollary \ref{cor2:dendMagnus}, Theorem \ref{thm:main} and Eq \eqref{eq:descent}.
\end{proof}

\begin{cor}[Mielnik--Pleba\'nski--Strichartz formula \cite{MielPleb,Strichartz}]\label{cor:mps2}
The Magnus element $\Omega(a) = R\big(\Omega'(a)\big)$ is formally given by the series:
\begin{equation}
\label{eq:mps2}
	\Omega(a)(s)= \sum_{n > 0}  \sum_{\sigma\in S_n} \frac{(-1)^{d(\sigma)}}{n \binom{n-1}{d(\sigma)}} 
	\idotsint\limits_{0<u_n<\cdots <u_1<s}a(u_{\sigma_1})\cdots a(u_{\sigma_n})\,du_1\cdots du_n.
\end{equation}
\end{cor}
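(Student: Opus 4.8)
The plan is to derive \eqref{eq:mps2} directly from the closed expression for $\Omega'(a)$ already obtained in Corollary \ref{cor:mps}, by applying the Rota--Baxter integration operator $R$. Since the Magnus element is by definition $\Omega(a)=R\big(\Omega'(a)\big)$, the entire content of the statement reduces to computing $R\big(\wt F_a(\sigma)\big)$ for each $\sigma\in S_n$ and then moving $R$ inside the formal series by linearity.

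First I would recall that $\wt F_a(\sigma)(s)$ was defined as the $s$-derivative of the iterated integral of $a(u_{\sigma_1})\cdots a(u_{\sigma_n})$ over the simplex $\{0<u_n<\cdots<u_1<s\}$. Applying $R$, which sends a function $f$ to $s\mapsto\int_0^s f(t)\,dt$, the fundamental theorem of calculus yields
\begin{equation*}
	R\big(\wt F_a(\sigma)\big)(s)=\idotsint\limits_{0<u_n<\cdots <u_1<s}a(u_{\sigma_1})\cdots a(u_{\sigma_n})\,du_1\cdots du_n,
\end{equation*}
the boundary contribution dropping out because the iterated integral vanishes at $s=0$. Substituting this into the series of Corollary \ref{cor:mps}, with its coefficients $\frac{1}{n}\frac{(-1)^{d(\sigma)}}{\binom{n-1}{d(\sigma)}}$, produces exactly the right-hand side of \eqref{eq:mps2}.

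There is no serious obstacle here: the only point deserving a word is the legitimacy of interchanging $R$ with the infinite sum. Because everything is graded by the degree $n$ and $R$ preserves this grading while acting term by term on each homogeneous component, the interchange is purely formal and raises no convergence issue; the coefficient of each iterated integral on the right is fixed by the single summand indexed by the corresponding $\sigma$. This completes the passage from the dendriform formula of Corollary \ref{cor:mps} to the Mielnik--Pleba\'nski--Strichartz formula.
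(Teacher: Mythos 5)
Your argument is correct and coincides with the paper's (implicit) justification: the paper states Corollary \ref{cor:mps2} as an immediate consequence of Corollary \ref{cor:mps}, obtained by applying $R$ termwise and using that $R\big(\wt F_a(\sigma)\big)$ recovers the iterated integral itself via the fundamental theorem of calculus, the boundary term vanishing at $s=0$. Your remark on the purely formal, degree-preserving interchange of $R$ with the series is the right observation and is all that needs to be said.
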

\noindent
\textbf{Remark :} As $\Omega(a)$ is a Lie element, we can use the Dynkin-Specht-Wever theorem, so that we recover the formula in its original setting:
\begin{equation*}
	\Omega(a)(s)= \sum_{n > 0}  \sum_{\sigma\in S_n} \frac{(-1)^{d(\sigma)}}{n^2 \binom{n-1}{d(\sigma)}} 
		\idotsint\limits_{0<u_n<\cdots <u_1<s}
	[a(u_{\sigma_1}),[a(u_{\sigma_2}),\ldots [a(u_{\sigma_{n-1}},\,a(u_{\sigma_n})]\cdots]]\,du_1\cdots du_n.
\end{equation*}\\
The order of the $u_j$'s is reversed compared to the original, since $Z=\exp({\Omega(a)})$ solves the initial value problem $\dot Z=aZ$ rather than $\dot Z=Za$ as in \cite{Strichartz}.
\bigskip

\end{document}